\newtheorem{theorem}{Theorem}
\newtheorem{lemma}[theorem]{Lemma}
\begin{document}
\onehalfspace

\title{Mostar index and bounded maximum degree}
\author{
Michael A. Henning$^1$ \and 
Johannes Pardey$^2$\and 
Dieter Rautenbach$^2$\and 
Florian Werner$^2$}
\date{}

\maketitle
\vspace{-10mm}
\begin{center}
{\small 
$^1$ Department of Mathematics and Applied Mathematics, University of Johannesburg,\\ Auckland Park, 2006 South Africa, \texttt{mahenning@uj.ac.za}\\[3mm]
$^2$ Institute of Optimization and Operations Research, Ulm University, Ulm, Germany,\\
\texttt{$\{$johannes.pardey,dieter.rautenbach,florian.werner$\}$@uni-ulm.de}
}
\end{center}

\begin{abstract}
Do\v{s}li\'{c} et al.~defined the Mostar index of a graph $G$ 
as $Mo(G)=\sum\limits_{uv\in E(G)}|n_G(u,v)-n_G(v,u)|$,
where, for an edge $uv$ of $G$,
the term $n_G(u,v)$ denotes the number of vertices of $G$
that have a smaller distance in $G$ to $u$ than to $v$.
For a graph $G$ of order $n$ and maximum degree at most $\Delta$, we show
$Mo(G)\leq 
\frac{\Delta}{2}n^2-(1-o(1))c_{\Delta}n\log(\log(n)),$
where $c_{\Delta}>0$ only depends on $\Delta$
and the $o(1)$ term only depends on $n$.
Furthermore, for integers $n_0$ and $\Delta$ at least $3$, we show
the existence of a $\Delta$-regular graph of order $n$ at least $n_0$ with 
$Mo(G)\geq 
\frac{\Delta}{2}n^2-c'_{\Delta}n\log(n),$
where $c'_{\Delta}>0$ only depends on $\Delta$.\\[3mm]
{\bf Keywords:} Mostar index
\end{abstract}

\section{Introduction}

Do\v{s}li\'{c} et al.~\cite{domasktizu} defined the {\it Mostar index} $Mo(G)$ 
of a (finite and simple) graph $G$ as
$$Mo(G)=\sum\limits_{uv\in E(G)}|n_G(u,v)-n_G(v,u)|,$$
where, for an edge $uv$ of $G$,
the term $n_G(u,v)$ denotes the number of vertices of $G$
that have a smaller distance in $G$ to $u$ than to $v$.
A graph is {\it chemical} if it has maximum degree at most $4$.
Since its introduction in 2018 
the Mostar index has already incited a lot of research,
mostly concerning 
sparse graphs and trees  {\cite{AXK, deli1,deli2, HZ1, HZ2, Tepeh}, 
chemical graphs \cite{CLXZ, DL1, GA, GR, XZTHD}, and 
hypercube-related graphs \cite{Mollard, OSS}, see also the recent survey \cite{aldo}.
The largest possible value of the Mostar index of 
graphs, split graphs, and bipartite graphs of a given order
was studied in \cite{gets,mi1,mi2}.

In \cite{domasktizu} Do\v{s}li\'{c} et al.~pose 
the problem to determine 
the chemical graphs as well as the chemical trees 
of a given order with largest Mostar index. 
While Deng and Li \cite{deli1} solve this problem for chemical trees,
little is known about the maximum Mostar index of chemical graphs 
or, more generally, graphs of bounded maximum degree of a given order.
As observed by Sharafdini and R\'{e}ti \cite{sr},
the contribution $|n_G(u,v)-n_G(v,u)|$ of each edge $uv$ 
to the Mostar index of a graph $G$ of order $n$ is at most $n-2$.
For a graph $G$ of maximum degree at most $\Delta$, 
this implies the following trivial bound.
\begin{eqnarray}\label{e1}
Mo(G) & \leq & \frac{\Delta n(n-2)}{2}.
\end{eqnarray}
Addressing the problem from \cite{domasktizu} mentioned above,
we contribute the following results.

\begin{theorem}\label{theorem1}
For integers $n_0$ and $\Delta$ at least $3$, 
there is a $\Delta$-regular graph $G$ of order $n$ at least $n_0$ with
$$Mo(G)\geq \frac{\Delta}{2}n^2-\left(20\Delta^3+12\Delta^2-24\Delta+48\right)n\log_{(\Delta-1)}(n).$$
\end{theorem}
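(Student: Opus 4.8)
The plan is to construct $G$ as a $(\Delta-1)$-ary \emph{tree of small gadgets}. I would start from a complete $(\Delta-1)$-ary rooted tree of depth $D$, replace each of its nodes by a gadget $P$ on a constant number of vertices (the constant depending only on $\Delta$), and wire the gadget of a node to the gadgets of its $\Delta-1$ children so that the resulting graph $G$ is $\Delta$-regular; the depth $D$ is then chosen large enough that $n:=|V(G)|\ge n_0$. The one structural feature to insist on is that each gadget $P$, together with all gadgets below it in the tree, is joined to the rest of $G$ only through a single distinguished \emph{gateway} vertex $g_P$ of its parent gadget. In particular $G$ has diameter $\Theta(\log_{\Delta-1}n)$, so $D=\Theta(\log_{\Delta-1}n)$. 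Producing a gadget that is at the same time $\Delta$-regular, accessible through one gateway, and ``layered'' from that gateway (explained below) is the delicate point; I expect it needs a short case analysis on the parity of $\Delta$ together with one or two explicit small gadgets, plus a harmless local fix-up at the root and at the bottom level.

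The heart of the argument is then a distance identity. If $w$ is a vertex outside a gadget $P$ and outside the part of the tree hanging below $P$, then every walk from $w$ into $P$ passes through $g_P$, so
\[
d_G(w,a)=d_G(w,g_P)+1+\rho_P(a)\qquad\text{for every vertex }a\text{ of }P,
\]
where $\rho_P(a)$ denotes the distance inside $P$ from $a$ to the set of neighbours of $g_P$ in $P$; crucially $\rho_P(a)$ does not depend on $w$. Hence, for any edge $ab$ with $a,b\in V(P)$, all such outside vertices $w$ lie on the same side of $ab$ in the Mostar sense -- they are all closer to whichever of $a,b$ has the smaller $\rho_P$-value -- as long as $\rho_P(a)\ne\rho_P(b)$. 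This is exactly where the gadget must be chosen to contain no, or only $O(\mathrm{poly}(\Delta))$, \emph{level edges}, i.e.\ edges $ab$ with $\rho_P(a)=\rho_P(b)$: such an edge would be equidistant from essentially every vertex of $G$ and would contribute nothing to $Mo(G)$, and one cannot afford that for more than $O(\mathrm{poly}(\Delta))$ edges in total; a suitably layered (nearly bipartite) gadget avoids level edges, and the same identity, applied to the edge joining $g_P$ to a neighbour in $P$, handles the gateway edges. Combining this with the observation that every edge contributes at most $n-2$, we obtain: if $e$ is a non-level edge lying in a gadget at depth $\ell$ of the tree, then the ``light'' side of $e$ has at most $(\text{number of vertices hanging below }e)+O(\mathrm{poly}(\Delta))=O\big(n(\Delta-1)^{-\ell}\big)+O(\mathrm{poly}(\Delta))$ vertices, so $e$ contributes at least $n-2-O\big(n(\Delta-1)^{-\ell}\big)-O(\mathrm{poly}(\Delta))$ to $Mo(G)$.

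It remains to add the losses $(n-2)-|n_G(u,v)-n_G(v,u)|$ over all $uv\in E(G)$, organised by the depth $\ell$ of the gadget containing $uv$. There are $(\Delta-1)^\ell$ gadgets at depth $\ell$, each carrying $O(\Delta^2)$ edges, and each such edge loses at most $O\big(n(\Delta-1)^{-\ell}\big)+O(\mathrm{poly}(\Delta))$. The first term gives a per-level contribution $O\big((\Delta-1)^\ell\cdot\Delta^2\cdot n(\Delta-1)^{-\ell}\big)=O(\Delta^2 n)$, hence $O(\Delta^2 n\log_{\Delta-1}n)$ after summing over the $D$ levels; the second term gives $O\big(\mathrm{poly}(\Delta)\cdot(\Delta-1)^\ell\big)$ per level, summing to $O\big(\mathrm{poly}(\Delta)\cdot(\Delta-1)^D\big)=O(\mathrm{poly}(\Delta)\cdot n)$. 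Since $|E(G)|=\tfrac{\Delta}{2}n$, this yields
\[
Mo(G)\ \ge\ \frac{\Delta n(n-2)}{2}-O\big(\mathrm{poly}(\Delta)\cdot n\log_{\Delta-1}n\big)\ \ge\ \frac{\Delta}{2}n^2-\big(20\Delta^3+12\Delta^2-24\Delta+48\big)\,n\log_{(\Delta-1)}(n),
\]
the final inequality being the explicit -- but routine -- bookkeeping of gadget size, number of edges per gadget and error terms needed to pin down the stated coefficient.

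The step I expect to be the real obstacle is the gadget design in the first paragraph: reconciling \emph{exact} $\Delta$-regularity -- which by a parity argument forces an even number of degree-deficient vertices in each gadget, and therefore interacts awkwardly with the single-vertex gateway and the $(\Delta-1)$-ary branching -- with the ``single effective entry point'' and the ``layered, hence level-edge-free'' properties that make the clean distance identity possible. Once a workable gadget is in hand, the distance identity, the per-edge estimate, and the summation are essentially bookkeeping, although extracting the precise polynomial $20\Delta^3+12\Delta^2-24\Delta+48$ still requires care.
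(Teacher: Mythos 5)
Your route is genuinely different from the paper's, and in outline it is viable, but it has a real gap exactly where you predict: the gadget is never exhibited, and the constraints on it are tighter than your sketch suggests. Note first that the ``obvious'' candidate, $K_\Delta$ with one entry vertex and $\Delta-1$ gateway vertices, fails badly: all $\binom{\Delta-1}{2}$ edges among the non-entry vertices are level edges ($\rho_P(a)=\rho_P(b)=1$), and since you can afford only $O(\log n)$ level edges in the whole graph (each one loses $\approx n$, and there are $\Theta(n/\Delta)$ gadgets), the gadget must have \emph{no} level edges at all outside $O(1)$ exceptional gadgets. Because $\rho_P$ is $1$-Lipschitz along edges, ``no level edges'' forces every gadget to be bipartite with its entry set contained in one colour class; combined with the exact degree prescription (entry points and gateways each missing a prescribed number of internal edges, everything else of internal degree exactly $\Delta$) and the parity constraint you already noticed, this is a nontrivial design problem. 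It is solvable -- for instance $K_{\Delta,\Delta}$ minus a suitable matching covering the entry point and the $\Delta-1$ gateways works when $\Delta$ is even, with a $k=2$ attachment variant handling odd total deficiency -- but until such a gadget is written down and checked for every $\Delta\ge 3$ (including the root and leaf levels), the proof is not complete. Two smaller omissions: whole child subtrees $Q$ of $P$ with $d(g_Q,a)=d(g_Q,b)$ are \emph{equidistant} from both ends of $ab$ rather than on the light side, so they must be added to the loss (same order of magnitude, hence harmless, but your accounting by ``light side'' alone misses them); and the stated constant $20\Delta^3+12\Delta^2-24\Delta+48$ would have to be verified against your gadget's size and edge count, which you defer.

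For comparison, the paper avoids the gadget-design problem entirely. It takes the tree $T_H$ (root with $\Delta$ children, all other internal vertices with $\Delta-1$ children, leaves at depth $H$) and joins each group of $\Delta$ \emph{consecutive} leaves completely to $\Delta-1$ new vertices; regularity is then automatic, but the new blobs straddle the boundaries between adjacent subtrees, so there are no cut vertices and your distance identity is unavailable. Instead the paper proves the general inequality $Mo(G)\ge nm-2\sum_{(v,u)}\bar n_G(v,u)$ for any orientation (Lemma \ref{lemma-1}) and, for the towards-the-root orientation, shows that the vertices not strictly closer to the parent $u$ than to the child $v$ are the descendants of $v$ plus a controlled ``spill-over'' into the neighbouring subtree, bounded via lowest common ancestors (the sets $N^L,N^R$ in Lemma \ref{lemma-2}), with a separate but analogous case analysis for the edges inside the blobs. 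In short: the paper pays with a more intricate distance analysis what you would pay in gadget construction; your approach, once the gadget exists, yields a cleaner summation, but as written the central object is missing.
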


\begin{theorem}\label{theorem2}
For integers $n$ and $\Delta$ at least $3$, 
if $G$ is a graph of order $n$ and maximum degree at most $\Delta$, 
then
$$Mo(G)\leq \frac{\Delta}{2}n^2-(2-o(1))\frac{(\Delta-2)}{(\Delta-1)^2}n\log_{(\Delta-1)}\left(\log_{(\Delta-1)}(n)\right),
$$
where the $o(1)$ term only depends on $n$.
\end{theorem}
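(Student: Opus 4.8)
The plan is to restate the Mostar index via transmissions and then amplify a simple ``one‑vertex'' saving by exploiting how balls grow in graphs of bounded degree. For a connected graph $G$ and a vertex $v$, write $t_G(v)=\sum_{w\in V(G)}d_G(v,w)$. If $uv\in E(G)$ then $d_G(z,u)-d_G(z,v)\in\{-1,0,1\}$ for every $z$, so this difference equals $\operatorname{sgn}(d_G(z,u)-d_G(z,v))$; summing over $z$ gives $n_G(v,u)-n_G(u,v)=t_G(u)-t_G(v)$, and hence $Mo(G)=\sum_{uv\in E(G)}\lvert t_G(u)-t_G(v)\rvert$. One may assume $G$ is connected ($Mo$ is additive over components while $\tfrac\Delta2n^2$ dominates $\sum_j\tfrac\Delta2n_j^2$), and essentially $\Delta$‑regular: if $\lvert E(G)\rvert\le\tfrac{\Delta n}2-m$ then $Mo(G)\le\lvert E(G)\rvert(n-2)\le\tfrac\Delta2n^2-\Delta n-m(n-2)$ already beats the claimed bound once $m$ is of order $\log_{\Delta-1}\log_{\Delta-1}(n)$. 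For $n$ small relative to $\Delta$ the subtracted term is non‑positive and there is nothing to prove.

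The engine is a local estimate at an arbitrary vertex $v$: in a breadth‑first search from any $z\neq v$ the vertex $v$ has at least one neighbour one level closer to $z$, while $v$ itself has all its neighbours one level farther; hence $\sum_{w\sim v}n_G(w,v)\ge n-1$ and $\sum_{w\sim v}n_G(v,w)\le \deg_G(v)\,n-(n-1)$, so that $\sum_{w\in N_G(v)}(t_G(w)-t_G(v))\le(\deg_G(v)-2)\,n+2$, with slack at least $\sum_{w\sim v}\lvert\{z:d_G(z,v)=d_G(z,w)\}\rvert$. Applied at a vertex $v_1$ of minimum transmission --- where every summand on the left is non‑negative and equals $\lvert t_G(w)-t_G(v_1)\rvert$ --- this bounds the contribution of the edges at $v_1$ by $(\deg_G(v_1)-2)n+2$ rather than the trivial $\deg_G(v_1)(n-2)$, a saving of at least $2(n-\deg_G(v_1)-1)$. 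That is of order $n$; the theorem asks to boost it to order $n\log\log n$.

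For the boost, order the vertices $v_1,\dots,v_n$ by non‑decreasing transmission, set $S_i=\{v_1,\dots,v_i\}$, and use $Mo(G)=\sum_{i=1}^{n-1}(t_G(v_{i+1})-t_G(v_i))\,e_G(S_i,V\setminus S_i)$. Summing the local estimate over $v_1,\dots,v_i$ controls the transmission gradient across each cut, $(t_G(v_{i+1})-t_G(v_i))\,e_G(S_i,V\setminus S_i)\le n\bigl(\textstyle\sum_{j\le i}\deg_G(v_j)-2i\bigr)+2i$; this is sharpest exactly when $S_i$ is sparse, which is where the complementary bound $e_G(S_i,V\setminus S_i)\le\Delta\min(i,n-i)$ is weakest. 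The third ingredient is ball growth: with $\lvert B_r(v)\rvert\le g_\Delta(r):=1+\Delta\frac{(\Delta-1)^r-1}{\Delta-2}$ for all $v,r$, every vertex has eccentricity at least $g_\Delta^{-1}(n)=\log_{\Delta-1}(n)+O_\Delta(1)$, and one expects the local estimate to force the transmissions of vertices far from $v_1$ upward, confining the $\approx\log_{\Delta-1}(n)$ vertices of smallest transmission to a ball about $v_1$ of radius only $g_\Delta^{-1}(\log_{\Delta-1}(n))=\log_{\Delta-1}\log_{\Delta-1}(n)+O_\Delta(1)$. Feeding this into the cut estimates above across those $\approx\log_{\Delta-1}\log_{\Delta-1}(n)$ scales and book‑keeping the shortfalls --- where the leading asymptotics of $g_\Delta$, notably $\Delta(\Delta-1)^{r-2}/g_\Delta(r)\to\tfrac{\Delta-2}{(\Delta-1)^2}$, together with the ``$-2n$'' of the local estimate, supply the coefficient --- should assemble to $\tfrac\Delta2n^2-(2-o(1))\tfrac{\Delta-2}{(\Delta-1)^2}\,n\log_{\Delta-1}\log_{\Delta-1}(n)$.

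The main obstacle is precisely this amplification. One must show that any $G$ pushing $Mo(G)$ toward $\tfrac\Delta2n^2$ is forced to concentrate the transmission increments $t_G(v_{i+1})-t_G(v_i)$ onto indices $i\approx n/2$ --- the only place the cut bound is near $\tfrac{\Delta n}2$ --- while ball growth says the low‑transmission part of $G$ is a tiny, nearly complete neighbourhood of $v_1$ and hence cannot be split that evenly. Turning this tension into the correct nested estimate, and in particular extracting the precise constant $\tfrac{2(\Delta-2)}{(\Delta-1)^2}$ rather than a crude $\Omega_\Delta(1)$, is the delicate part; the remaining cases ($G$ far from regular, or $n$ below a $\Delta$‑dependent threshold) are dispatched by the trivial bound as above.
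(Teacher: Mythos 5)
Your setup is sound as far as it goes: the identity $Mo(G)=\sum_{uv\in E(G)}|t_G(u)-t_G(v)|$ for connected $G$, the reduction to connected and nearly $\Delta$-regular graphs, the Abel-summation formula over the transmission ordering, and the local estimate $\sum_{w\sim v}(t_G(w)-t_G(v))\le(\deg_G(v)-2)n+2$ are all correct, and at a minimum-transmission vertex this does give a saving of order $n$ over the trivial bound. But the theorem is precisely the assertion that this saving can be amplified to order $n\log\log n$ with the constant $\tfrac{2(\Delta-2)}{(\Delta-1)^2}$, and that amplification is exactly what your proposal does not carry out. The third and fourth paragraphs only say that ball growth ``should'' confine the $\approx\log_{(\Delta-1)}(n)$ vertices of smallest transmission to a ball of radius $\approx\log_{(\Delta-1)}\log_{(\Delta-1)}(n)$ about $v_1$ and that the cut estimates ``should assemble'' to the claimed bound; you yourself flag this as the main obstacle. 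The confinement claim is not a routine consequence of what precedes it --- nothing in the local estimate prevents a graph from having several well-separated regions of low transmission --- and even granting it, the passage from the per-cut inequalities to a quantitative $n\log\log n$ lower bound on the accumulated defects $(t_G(v_{i+1})-t_G(v_i))\bigl(\Delta\min(i,n-i)-e_G(S_i,V\setminus S_i)\bigr)$ is left entirely open. So there is a genuine gap at the only point where the theorem has content beyond the trivial bound.

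For comparison, the paper reaches the bound by a different and more tractable route: it takes a breadth-first search tree $T$ of the largest component rooted at an arbitrary vertex $r$, notes that for a tree edge $uv$ with $v$ the parent of $u$ the ancestors of $u$ give $n_G(v,u)\ge d(u)$ and the descendants give $n_G(u,v)\ge n^{\downarrow}(u)$, hence $|n_G(u,v)-n_G(v,u)|\le n-2\min\{d(u),n^{\downarrow}(u)\}$, and then proves a purely tree-combinatorial lemma: for any rooted tree of order $n$ and maximum degree at most $\Delta$, $\sum_u\min\{d(u),n^{\downarrow}(u)\}\ge(1-o(1))\tfrac{\Delta-2}{(\Delta-1)^2}n\log_{(\Delta-1)}\log_{(\Delta-1)}(n)$. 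There the $\log\log n$ emerges from a clean dichotomy: either many vertices have depth at least $\tfrac12\log_{(\Delta-1)}(n)$ while dominating their (small) subtrees, or the deep part of the tree splits into so few components that each has size about $\log(n)/\log\log(n)$, and a convexity argument finishes. To salvage your approach you would need an analogous dichotomy for the transmission ordering, which appears substantially harder than the tree version; I would recommend reorganizing the argument around a BFS tree.
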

For a fixed integer $\Delta$ at least $3$, we conjecture that
the maximum Mostar index of a graph of order $n$ and 
maximum degree at most $\Delta$ is
$\frac{\Delta}{2}n^2-\Theta(n\log(n))$,
where the hidden constants depend on $\Delta$.

The proofs of our theorems and several auxiliary results are given in Section \ref{sec2}.

\section{Auxiliary results and proofs}\label{sec2}

Throughout this section, 
let $\Delta$ be an integer at least $3$.
Since all logarithms have base $\Delta-1$ in this section,
for the sake of brevity, 
we write ``$\log$'' rather than ``$\log_{(\Delta-1)}$''.
Some of our estimates leave room for minor improvements;
as these did not seem essential, we chose to keep the proofs as simple as possible.

For a graph $G$ of order $n$ and an edge $uv$ of $G$, let $\bar{n}_G(v,u)$
denote the number of vertices of $G$ 
whose distance to $v$ is at most their distance to $u$,
that is, we have $\bar{n}_G(v,u)=n-n_G(u,v)$.

Theorem \ref{theorem1} relies on the following lower bound on the Mostar index.

\begin{lemma}\label{lemma-1}
If $G$ is a graph with $n$ vertices and $m$ edges 
and $\vec{G}$ is any orientation of $G$, then
$$Mo(G)\geq n m -2\sum\limits_{(v,u)\in E\left(\vec{G}\right)}\bar{n}_G(v,u).$$
\end{lemma}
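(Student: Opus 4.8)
The plan is to unfold the definition of the Mostar index and estimate each edge's contribution from below using the orientation $\vec G$. For an edge $uv$ of $G$, the quantity $|n_G(u,v)-n_G(v,u)|$ is what we want to bound. The trick is to relate $n_G(u,v)$ and $n_G(v,u)$ to the "at most" counts $\bar n_G(v,u)$ and $\bar n_G(u,v)$. Since every vertex either has a strictly smaller distance to $u$ than to $v$, a strictly smaller distance to $v$ than to $u$, or equal distances, we have $n_G(u,v)+n_G(v,u)\le n$, and more precisely $n_G(u,v)=n-\bar n_G(v,u)$ and $n_G(v,u)=n-\bar n_G(u,v)$ (this is exactly the identity stated just before the lemma, applied in both directions).

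**Next I would** write, for the edge $uv$ oriented as $(v,u)$ in $\vec G$,
\begin{align*}
|n_G(u,v)-n_G(v,u)| &= |\,(n-\bar n_G(v,u)) - (n-\bar n_G(u,v))\,| = |\bar n_G(u,v) - \bar n_G(v,u)|\\
&\ge n_G(u,v) - n_G(v,u) = (n-\bar n_G(v,u)) - n_G(v,u) \ge n - 2\bar n_G(v,u),
\end{align*}
where the last step uses $n_G(v,u)\le \bar n_G(v,u)$ (a vertex closer to $v$ than to $u$ is in particular at most as close to $v$ as to $u$). Actually the cleanest route is: $|n_G(u,v)-n_G(v,u)| \ge n_G(u,v)-n_G(v,u) \ge n_G(u,v) - \bar n_G(v,u) = (n - \bar n_G(v,u)) - \bar n_G(v,u) = n - 2\bar n_G(v,u)$.

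**Then I would** sum this inequality over all edges $uv\in E(G)$, using the orientation to pick for each edge the ordered pair $(v,u)\in E(\vec G)$ that appears in the sum on the right-hand side of the lemma:
\[
Mo(G) = \sum_{uv\in E(G)} |n_G(u,v)-n_G(v,u)| \ge \sum_{(v,u)\in E(\vec G)} \bigl(n - 2\bar n_G(v,u)\bigr) = nm - 2\sum_{(v,u)\in E(\vec G)} \bar n_G(v,u),
\]
since $|E(\vec G)| = m$. This gives the claimed bound.

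**I expect the only real subtlety** to be getting the direction of the inequality right and making sure the bound $|n_G(u,v)-n_G(v,u)| \ge n - 2\bar n_G(v,u)$ holds for the \emph{specific} orientation of each edge chosen by $\vec G$ — but this is automatic, because the bound $|a-b|\ge a-b$ holds regardless of which of the two endpoints we call $v$; the orientation merely fixes which of $\bar n_G(v,u)$, $\bar n_G(u,v)$ is named in the final sum, and either choice works. There are no genuine obstacles here; the lemma is essentially a one-line consequence of the triangle-type inequality $|a-b|\ge a-b$ together with the identity $n_G(u,v) = n - \bar n_G(v,u)$ and the trivial bound $n_G(v,u)\le \bar n_G(v,u)$.
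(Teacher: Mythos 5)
Your proposal is correct and follows essentially the same route as the paper: both reduce to the per-edge inequality $|n_G(u,v)-n_G(v,u)|\geq n-2\bar{n}_G(v,u)$, obtained from the identity $n_G(u,v)=n-\bar{n}_G(v,u)$ together with $n_G(v,u)\leq\bar{n}_G(v,u)$, and then sum over the chosen orientation. The only cosmetic difference is that the paper phrases the per-edge step via $n-|n_G(u,v)-n_G(v,u)|\leq 2\min\{\bar{n}_G(u,v),\bar{n}_G(v,u)\}$ with a WLOG case, whereas you use $|a-b|\geq a-b$ directly and thereby avoid the case distinction.
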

\begin{proof}
If $uv$ is an edge of $G$ with $n_G(u,v)\geq n_G(v,u)$, 
then $\bar{n}_G(u,v)\geq \bar{n}_G(v,u)$ and
\begin{eqnarray*}
n-|n_G(u,v)-n_G(v,u)| &=& n-n_G(u,v)+n_G(v,u)\\
&=&\bar{n}_G(v,u)+n_G(v,u)\\
&\leq &2\bar{n}_G(v,u)\\
&=&2\min\{ \bar{n}_G(u,v),\bar{n}_G(v,u)\}.
\end{eqnarray*}
Now,
\begin{eqnarray*}
Mo(G) &=& \sum\limits_{uv\in E(G)}|n_G(u,v)-n_G(v,u)|\\
&=&nm-\sum\limits_{uv\in E(G)}\big(n-|n_G(u,v)-n_G(v,u)|\big)\\
&=&nm-2\sum\limits_{uv\in E(G)}\min\{ \bar{n}_G(u,v),\bar{n}_G(v,u)\}\\
&\geq& n m -2\sum\limits_{(v,u)\in E\left(\vec{G}\right)}\bar{n}_G(v,u),
\end{eqnarray*}
which completes the proof.
\end{proof}

We recall some terminology.

Let $T$ be a rooted tree with root $r$, 
and let $u$ be a vertex of $T$.
The {\it depth} of $u$ in $T$ is the distance between $r$ and $u$ in $T$
and the {\it height} of $u$ in $T$ is the maximum distance between $u$ 
and a leaf of $T$ that is a descendant of $u$.
The {\it height} of $T$ is the height of its root.

Now, let $H$ be an integer at least $2$.
Let $T_H$ be a rooted tree with root $r$ in which 
the root has $\Delta$ children and
every vertex distinct from the root 
either has $\Delta-1$ children
or is a leaf of depth $H$.
There is a natural plane embedding of $T_H$ with $r$ at the top
in such a way that the vertices of $T_H$ of the same depth are linearly ordered, 
say from {\it left} to {\it right}.
All children of some vertex of $T_H$ appear consecutively in this linear order
and, if two vertices $u$ and $v$ of $T_H$ have the same depth 
and $u$ lies left of $v$,
then all children of $u$ lie left of all children of $v$.
If $u$ is not the rightmost vertex of some depth, 
then let $u^+$ denote the vertex of the same depth as $u$
that follows $u$ immediately in the linear order.

Let $u_1,\ldots,u_{\Delta p}$ for $p=(\Delta-1)^{H-1}$ denote the leaves of $T_H$
in their linear order from left to right.
Starting with $T_H$, 
we construct a graph $G_H$ as follows:
For every positive integer $i$ at most $p$,
add a set of $\Delta-1$ new vertices
and add all $\Delta(\Delta-1)$ possible edges between the new vertices
and the vertices in $\{ u_{\Delta(i-1)+1},\ldots,u_{\Delta i}\}$.
We call the vertices and edges of $T_H$ {\it black}
and the vertices and edges of $G_H$ that do not belong to $T_H$ {\it grey}.
Note that $G_H$ is a $\Delta$-regular graph with 
$1+\sum\limits_{i=1}^H\Delta(\Delta-1)^{i-1}$ black vertices
and $(\Delta-1)^H$ grey vertices.
The {\it height} of a black vertex in $G_H$ is its height in $T_H$,
and the neighbors of every grey vertex in $G_H$ are called its {\it parents}. 

See Figure \ref{fig1} for an illustration.

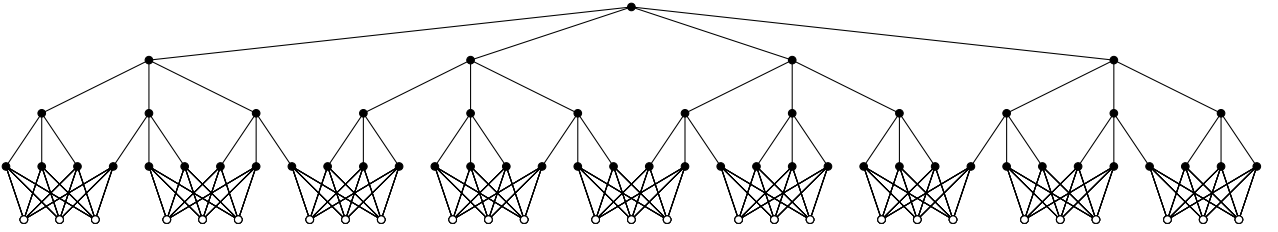
\begin{figure}[H]
	\centering
	\begin{tikzpicture} [scale=0.47]
	\tikzstyle{point}=[draw,circle,inner sep=0.cm, minimum size=1mm, fill=black]
	\tikzstyle{point2}=[draw,circle,inner sep=0.cm, minimum size=1mm, fill=white]

	\xdef\t{2} 
	
	\foreach \i in {-1}{
		\foreach \j in {1}{
			\coordinate (h\j\i) at (2*3^\t,1.5) [label=left:] {};
			\node[point] at (2*3^\t,1.5) [label=left:] {};
		}
	}
	\foreach \k in {0,...,3}{
	\xdef\m{1}
		
	\begin{scope}[shift={(\k*3^\t,0)}]
	\foreach \i in {0,...,\t}{
		\foreach \j in {1,...,\m}{
			\pgfmathparse{3^\t /  3^\i}
			\xdef\jj{\pgfmathresult}
			\coordinate (h\j\i) at (\jj*\j-\jj/2,-1.5*\i) [label=left:] {};
			\node[point] at (\jj*\j-\jj/2,-1.5*\i) [label=left:] {};
			\pgfmathparse{int((\j+2)/3)}
			\xdef\jjj{\pgfmathresult}
			\pgfmathparse{\i-1}
			\xdef\ii{\pgfmathresult}
			\draw (h\j\i) -- (h\jjj\ii);
		}
		\pgfmathparse{3*\m}
		\xdef\m{\pgfmathresult}
	}
\end{scope}
	\pgfmathparse{3^\t-1}
	\xdef\z{\pgfmathresult}
	\foreach \i in {0,...,\z}{
		\begin{scope}[shift={(4*\i+0.5, -1.5*\t)}]
		\foreach \j in {0.5,1.5,2.5}{
			\foreach \i in {0,...,3}{
				\draw (\i,0)--(\j,-1.5);
			}
		\node[point2] at (\j,-1.5) [label=left:] {};
		}
		\end{scope}
}
}

	\end{tikzpicture}
\caption{$G_3$ for $\Delta=4$.}\label{fig1}
\end{figure}

Let $\vec{G}_H$ arise from $G_H$ by orienting all edges of $G_H$ towards the root,
that is, if $u$ is a parent of $v$ in $G_H$, then $E\left(\vec{G}_H\right)$ 
contains the oriented edge $(v,u)$.

\begin{lemma}\label{lemma-2}
If $n$ denotes the order of $G_H$, then
$$\sum\limits_{(v,u)\in E\left(\vec{G}_H\right)}\bar{n}_{G_H}(v,u)
\leq  \left(10\Delta^3+6\Delta^2-12\Delta+24\right)n\log(n).$$
\end{lemma}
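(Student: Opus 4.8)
The plan is to estimate $\bar{n}_{G_H}(v,u)$ for a single oriented edge $(v,u)$ of $\vec{G}_H$ and then sum. Recall that $\bar{n}_{G_H}(v,u)$ counts the vertices $w$ with $d_{G_H}(w,v)\le d_{G_H}(w,u)$, where $u$ is a parent of $v$, so the edge $vu$ points "upward" toward the root. Since $u$ is one step closer to the root than $v$, the vertices that are closer to (or equidistant from) $v$ than to $u$ are exactly those whose shortest path to the root region "goes through" $v$'s subtree, plus a bounded amount of slack coming from the fact that $G_H$ is not a tree (the grey vertices create short cuts near the bottom, and siblings at each level are joined only through their common parent, so detours cost at least two edges). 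Concretely, I would fix the edge $(v,u)$ where $v$ is a black vertex at depth $d$ (or a grey vertex, handled separately), let $T_v$ be the set of black and grey vertices lying in the subtree of $T_H$ rooted at $v$ together with the grey vertices attached to leaves of that subtree, and argue that every vertex counted by $\bar{n}_{G_H}(v,u)$ lies within distance $O(1)$ (in fact within distance roughly $2$ or $3$, independent of $H$) of $T_v\cup\{v\}$ in the "sibling" direction. This is where the structure matters: any path from a vertex $w\notin T_v$ to $v$ must either enter through $u$ (making $w$ strictly closer to $u$) or loop down through some grey vertex and back up, which costs at least two extra edges, so only $w$ very near the boundary of $T_v$ can possibly be equidistant.

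The key quantitative step is then: the number of vertices of $G_H$ in the subtree hanging below a black vertex of height $h$ is at most some explicit $C\Delta(\Delta-1)^{h-1}$-type quantity (a geometric sum), call it $N(h)$, and the number of "near-boundary" extra vertices is at most a constant (depending only on $\Delta$) times $\Delta(\Delta-1)$ or so, certainly at most $c\Delta^2$. Hence for each edge $(v,u)$ with $v$ of height $h$ we get $\bar{n}_{G_H}(v,u)\le N(h)+c\Delta^2\le c'\Delta^2(\Delta-1)^{h}$ for an absolute constant $c'$. Next I would sum over all oriented edges: the black edges at each depth level come in the counts dictated by $T_H$ (there are about $(\Delta-1)^{H-h}$ black vertices of height $h$, each contributing one upward edge), and the grey edges number $\Delta(\Delta-1)^H=\Delta p(\Delta-1)$-ish and each contributes a term of size $O(\Delta^2)$ since a grey vertex sits at the very bottom. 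Summing $(\Delta-1)^{H-h}\cdot c'\Delta^2(\Delta-1)^{h}=c'\Delta^2(\Delta-1)^{H}$ over the $H$ height-values gives $O(\Delta^2 H (\Delta-1)^H)$, and since $n=\Theta((\Delta-1)^H)$ and thus $H=\Theta(\log n)$ (with base $\Delta-1$), this is $O(\Delta^{O(1)} n\log n)$. Tracking the constants carefully — the geometric-series constant, the $\Delta^2$ from the grey gadgets, the $\Delta^3$ that appears once the per-level bound is multiplied out — should land at the claimed coefficient $10\Delta^3+6\Delta^2-12\Delta+24$; I would be generous in the estimates (as the authors warn they are) rather than optimize.

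I expect the main obstacle to be the case analysis bounding the "extra" equidistant vertices $w\notin T_v$ precisely enough, and in particular handling the grey vertices correctly in two roles: as the vertex $v$ at the tail of an oriented edge (where the whole subtree $T_v$ is tiny, just $v$ itself, so $\bar{n}$ is small and these edges are cheap), and as detour-creating shortcuts that let vertices in a sibling subtree reach $v$ without passing through $u$. The detour issue is the real subtlety: a vertex in the subtree of a sibling $v'$ of $v$ can reach $v$ via $v'$'s leaf, a shared grey vertex, and back up — but this path has length at least (depth difference)$+2$ worse than going up to the common parent, so it only helps vertices within the bottom two levels of $T_{v'}$, of which there are $O(\Delta^2)$; once that bound is nailed down with the plane/linear-order structure, the summation is routine. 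I would also double-check the boundary contribution near the root separately, since the root has $\Delta$ children rather than $\Delta-1$, but that only affects $O(1)$ edges and is absorbed into the constant.
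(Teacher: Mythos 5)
Your outline breaks at its central quantitative claim: that for an oriented edge $(v,u)$ the vertices counted by $\bar{n}_{G_H}(v,u)$ lying \emph{outside} the subtree below $v$ number only $O(\Delta^2)$, because a grey detour ``costs at least two extra edges''. This is false, and in fact the comparison goes the other way: the grey gadgets at the bottom make the route \emph{around the bottom} much shorter than the route \emph{over the top} for vertices that are adjacent to $v$'s subtree in the left-to-right order of the leaves but far from it in the tree. Concretely, let $v=w$ be the rightmost leaf of the subtree rooted at the leftmost child of the root, and let $u$ be its parent, so $h=0$. Since $\gcd(\Delta-1,\Delta)=1$, the leaf $w$ and its successor $w^+$ always lie in the same group of $\Delta$ consecutive leaves and hence share a grey neighbour, so $d(v,w^+)=2$ while $d(u,w^+)=3$. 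More importantly, the ancestor $v_2$ of $w^+$ at height $H-2$ satisfies $d(v,v_2)\le 2+(H-2)=H$, whereas $d(u,v_2)=H+1$ (both the path over the root and the path around the bottom have length $H+1$). One checks, as the paper does, that \emph{every} vertex below $v_2$ is at least as close to $v$ as to $u$, so this single edge already has $\bar{n}_{G_H}(v,u)\ge\left|N^{\downarrow}(v_2)\right|=\Theta\!\left((\Delta-1)^{H}\right)=\Theta(n)$ --- not $O(\Delta^2)$ beyond the subtree of $v$. The same phenomenon occurs for grey edges $uv$ whenever the parents $u_i,u_{i+1}$ of the grey vertex $v$ have a high lowest common ancestor, so your claim that grey edges are uniformly ``cheap'' also fails.

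The paper's proof is built precisely around this difficulty: it identifies the lateral part $N^R(v)$ of $\bar{n}_{G_H}(v,u)$ as exactly $N^{\downarrow}(v_2)$ for a vertex $v_2$ of height $\ell-h-2$, where $\ell$ is the height of the lowest common ancestor $s$ of the rightmost leaf $w$ below $v$ and its successor $w^+$. The quantity $4(\Delta-1)^{\ell-h-1}$ can be enormous, but only for the few edges whose $\ell$ is large --- there are only $\Delta(\Delta-1)^{H-\ell-1}$ candidates for $s$ --- and the double sum over $h$ and $\ell$ of (number of such edges) times (size of $N^{\downarrow}(v_2)$) telescopes to $O\!\left((\Delta-1)^H H\right)$. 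So the $O(n\log n)$ bound holds only in this amortized, two-parameter sense; the uniform per-edge bound $\bar{n}_{G_H}(v,u)\le N(h)+c\Delta^2$ that drives your summation does not exist. The part of your plan handling the descendants of $v$ (the sum $\sum_h \Delta(\Delta-1)^{H-h-1}\cdot O\!\left((\Delta-1)^{h+1}\right)$) is sound and matches the paper's estimate (2); it is the lateral contribution that requires the genuinely non-local counting argument you are missing.
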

\begin{proof}
For a black vertex $v$ of $G_H$ that is not the root $r$,
let $N^{\downarrow}(v)$ denote the set containing 
$v$ as well as all black and grey descendants of $v$.
If $v$ has height $h$, then $N^{\downarrow}(v)$ contains
$\sum\limits_{i=0}^h(\Delta-1)^i\leq (\Delta-1)^{h+1}$ black vertices
and at most 
$\left(\frac{(\Delta-1)^h}{\Delta}+2\right)(\Delta-1)\leq 
(\Delta-1)^h+2(\Delta-1)\leq 3(\Delta-1)^{h+1}$ grey vertices, 
which implies 
\begin{eqnarray}\label{e-1}
\left|N^{\downarrow}(v)\right|\leq 4(\Delta-1)^{h+1}.
\end{eqnarray}
Since there are at most $\Delta(\Delta-1)^{H-h-1}$ black vertices in $G_H$ of height $h$,
we obtain, using (\ref{e-1}),
\begin{eqnarray}\label{e-2}
\sum\limits_{v\in V(T_H)\setminus \{ r\}}\left|N^{\downarrow}(v)\right|
&\leq & \sum\limits_{h=0}^{H-1}\Delta(\Delta-1)^{H-h-1}4(\Delta-1)^{h+1}
=4\Delta(\Delta-1)^HH.
\end{eqnarray}
First, we consider the contribution 
$$\sum\limits_{(v,u)\in E\left(\vec{T}_H\right)}\bar{n}_{G_H}(v,u)$$
of the black edges.
Therefore, let $uv$ be a black edge,
where $v$ is a black vertex of height $h$ and $u$ is the parent of $v$.
Let $N$ be the set of vertices of $G$ 
whose distance to $v$ is at most their distance to $u$,
that is, $\bar{n}_{G_H}(v,u)=|N|$.
Clearly, we have $N^{\downarrow}(v)\subseteq N$.
Removing from $G_H$ all vertices in $N^{\downarrow}(v)$ 
as well as all ancestors of $v$,
the remaining vertices naturally split into two sets $L$ and $R$
that lie to the left and right of the removed vertices, respectively.

See Figure \ref{fig2} for an illustration.

\begin{figure}[H]
	\centering
	\begin{tikzpicture} [scale=0.4]
	\tikzstyle{point}=[draw,circle,inner sep=0.cm, minimum size=1mm, fill=black]
	\tikzstyle{point2}=[draw,circle,inner sep=0.cm, minimum size=1mm, fill=white]
	
\draw [rounded corners, rounded corners=5mm] (-1.12,-2)--(5,8.6)--(11.12,-2)--cycle;

\draw [rounded corners, rounded corners=10mm] (-2,-2)--(4,10.6)--(4,18) -- (-10,-2) --cycle;

\draw [rounded corners, rounded corners=10mm] (12,-2)--(6,10.6)--(6,18) -- (20,-2) --cycle;

\node[point] (v) at (5,7) [label=below:$v$] {};
\node[point] (u) at (4.5,9) [label=right:$u$] {};
\node[point] (w1) at (4.5,11) [label=right:] {};
\node[point] (w2) at (4.7,13) [label=right:] {};
\node[point] (w3) at (5,15) [label=right:] {};
\node[point] (w4) at (5.2,17) [label=above:$r$] {};
	
\draw[very thick] (v) --(u);
\draw (u) -- (w1) --(w2) --(w3)--(w4);	

\draw (w4) -- (2.5,15);
\draw (w4) -- (3,15);
\draw (w4) -- (7,15);

\draw (6.5, 13)-- (w3) -- (3,13);

\draw (2.5, 11)-- (w2) -- (3,11);

\draw (7.5, 9)-- (w1) -- (8,9);

\draw (1.5, 7)-- (u) -- (8,7);


\draw[dashed] (v) -- (-11,7);
\draw[dashed] (18.5,0) -- (-11,0);
\draw[->] (-11,4.2) -- (-11,7);
\draw[->] (-11,2.8) -- (-11,0);
\node at (-11,2.25) [label=above:$h$] {};

\node at (0.5,7.5) [label=above:$L$] {};
\node at (10,7.5) [label=above:$R$] {};
\node at (5,1) [label=above:$N^\downarrow (v)$] {};

\begin{scope}[shift={(1,0)}]
\foreach \j in {0.5,1.5,2.5}{
	\foreach \i in {0,...,3}{
		\draw (\i,0)--(\j,-1.5);
		\node[point] at (\i,0) [label=left:] {};
	}
	\node[point2] at (\j,-1.5) [label=left:] {};
}
\end{scope}
\begin{scope}[shift={(15,0)}]
\foreach \j in {0.5,1.5,2.5}{
	\foreach \i in {0,...,3}{
		\draw (\i,0)--(\j,-1.5);
		\node[point] at (\i,0) [label=left:] {};
	}
	\node[point2] at (\j,-1.5) [label=left:] {};
}
\end{scope}

\begin{scope}[shift={(8.5,0)}]
\foreach \j in {-0.5,0.5,1.5}{
	\foreach \i in {-1,0,3,4}{
		\draw (\i,0)--(\j,-1.5);
		\node[point] at (\i,0) [label=left:] {};
	}
	\node[point2] at (\j,-1.5) [label=left:] {};
}
\end{scope}

\begin{scope}[shift={(-7,0)}]
\foreach \j in {0.5,1.5,2.5}{
	\foreach \i in {0,...,3}{
		\draw (\i,0)--(\j,-1.5);
		\node[point] at (\i,0) [label=left:] {};
	}
	\node[point2] at (\j,-1.5) [label=left:] {};
}
\end{scope}
	\end{tikzpicture}
\caption{The sets $N^{\downarrow}(v)$, $L$, and $R$.}\label{fig2}
\end{figure}
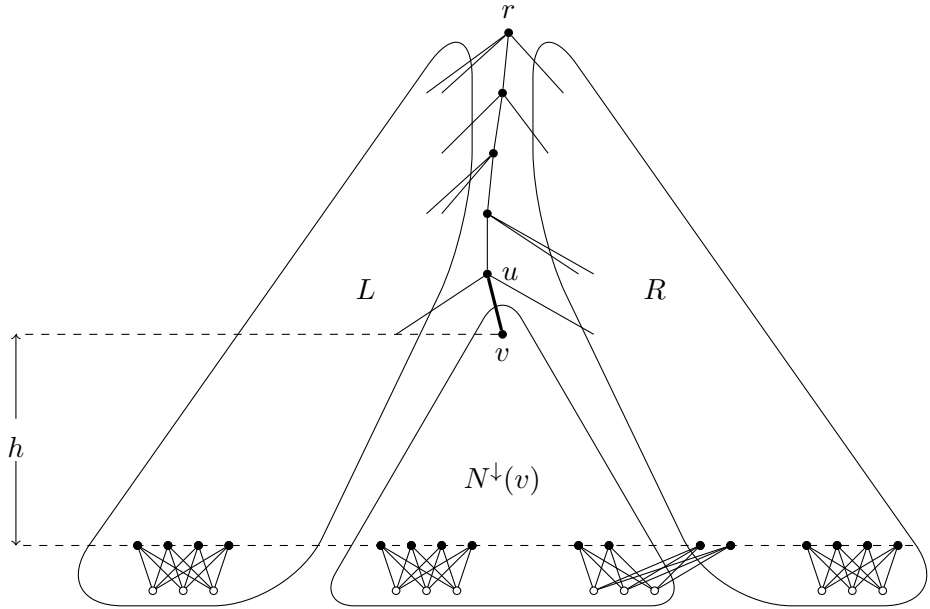

Let 
$N^L(v)=|N\cap L|$
and
$N^R(v)=|N\cap R|$.
Note that $N\setminus (L\cup R)=N^{\downarrow}(v)$.

By the left-right symmetry of $G_H$, we have
\begin{eqnarray}\label{e-3}
\sum\limits_{v\in V(T_H)\setminus \{ r\}}\left|N^L(v)\right|
=\sum\limits_{v\in V(T_H)\setminus \{ r\}}\left|N^R(v)\right|.
\end{eqnarray}
Let $w$ be the rightmost black descendant of depth $H$ of $v$.
Let $s$ be the lowest common ancestor of $w$ and $w^+$,
that is, among all common ancestor of $w$ and $w^+$ 
the vertex $s$ has maximum depth.
If $w$ and $w^+$ have no grey common neighbor,
then $N^R(v)$ is empty.
If $s$ equals $u$, then $w^+$ is closer to $u$ than to $v$
and the structure of $G_H$ implies that $N^R(v)$ is empty.
See Figure \ref{fig3} for an illustration.

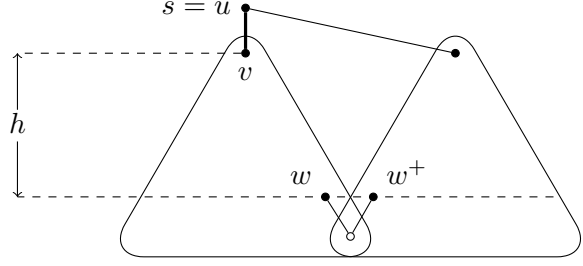
\begin{figure}[H]
	\centering
	\begin{tikzpicture} [scale=0.3] 
	\tikzstyle{point}=[draw,circle,inner sep=0.cm, minimum size=1mm, fill=black]
	\tikzstyle{point2}=[draw,circle,inner sep=0.cm, minimum size=1mm, fill=white]
	
	\draw [rounded corners, rounded corners=5mm] (-1.12,-2)--(5,8.6)--(11.12,-2)--cycle;
	
\begin{scope}[shift={(9.2,0)}]
	\node[point] (vp) at (5,7) [label=below:] {};		
	\draw [rounded corners, rounded corners=5mm] (-1.12,-2)--(5,8.6)--(11.12,-2)--cycle;
\end{scope}

	\node[point] (v) at (5,7) [label=below:$v$] {};
	\node[point] (u) at (5,9) [label=left:\text{$s=u$}] {};
	\draw (u) -- (vp);
	
	\node[point] (w) at (8.5,0.65) [label=above left: $w$] {};
	\node[point2] (b) at (9.6,-1.1) [label=left:] {};
	\node[point] (wp) at (10.6,0.65) [label=above right: $w^+$] {};
	\draw (w) -- (b) --(wp);

	\draw[very thick] (v) --(u);
	
	\draw[dashed] (v) -- (-5,7);
	\draw[dashed] (18.5,0.65) -- (-5,0.65);
	\draw[->] (-5,4.5) -- (-5,7);
	\draw[->] (-5,3.1) -- (-5,0.65);
	\node at (-5,2.55) [label=above:$h$] {};

	\end{tikzpicture}
\caption{In the case $s=u$, the distance between $u$ and $w^+$ is $h+1$ and the distance between $v$ and $w^+$ is $h+2$.
Note that the grey common neighbor of $w$ and $w^+$ may not exist.}\label{fig3}
\end{figure}

Now, suppose that 
$w$ and $w^+$ have a grey common neighbor
and
that $s$ is distinct from $u$.
Let $s$ have height $\ell$,
that is, the distance within $T_H$ between $s$ and $v$ is $\ell-h$.
Note that $\ell\geq h+2$.
Let $v_2$ be the ancestor of $w^+$ of height $\ell-h-2$
and let $u_2$ be the parent of $v_2$.
Note that $v_2$ is closer to $v$ than to $u$
and that $u_2$ is closer to $u$ than to $v$.
This implies $N^{\downarrow}(v_2)\subseteq N^R(v)$.
Let $w_2$ be the rightmost black descendant of depth $H$ of $v_2$.

Before we proceed the estimation, we show,
by an inductive argument over $h_2=\ell-h-2$,
that the distance in 
$G'=G_H\left[N^{\downarrow}(v_2)\right]$ between $w^+$ and $w_2$ is $2h_2$,
in particular, some shortest path in $G'$ 
between $w^+$ and $w_2$ passes through $v_2$.
For $h_2=0$, we have $w^+=w_2=v_2$, and the statement is trivial.
For $h_2>0$, the structure of $G_H$ 
together with the induction hypothesis imply 
that every path in $G'$ between $w^+$ and $w_2$ that avoids $v_2$
has length at least $2(h_2-1)(\Delta-1)+2(\Delta-2)\geq 2h_2$,
which implies the desired statement.

Now, since $u_2$ is an ancestor of $w_2^+$,
the vertex $w_2^+$ is closer to $u_2$ than to $v_2$.
See Figure \ref{fig4} for an illustration.

\begin{figure}[H]
	\centering
	
	\begin{tikzpicture} [scale=0.3]
	\tikzstyle{point}=[draw,circle,inner sep=0.cm, minimum size=1mm, fill=black]
	\tikzstyle{point2}=[draw,circle,inner sep=0.cm, minimum size=1mm, fill=white]
	
	\draw [rounded corners, rounded corners=5mm] (0,-2)--(7.5,11)--(15,-2)--cycle;
	\node[point] (v2) at (7.5,9.3) [label=below:$v_2$] {};	
	\node[point] (u2) at (8.5,11.02) [label=left:$u_2$] {};	
	
	\draw (v2) --(u2);		
	
	\begin{scope}[shift={(11.95,0)}]
	\node[point] (v2p) at (7.5,9.3) [label=below:] {};		
	\draw [rounded corners, rounded corners=5mm] (0,-2)--(7.5,11)--(15,-2)--cycle;
	\end{scope}
	
	\draw (u2) -- (v2p);
	
	\begin{scope}[shift={(-8.05,0)}]
	\node[point] (v) at (5,7) [label=below:$v$] {};
	\node[point] (u) at (4,8.73) [label=left:$u$] {};		
	\draw [rounded corners, rounded corners=5mm] (-1.12,-2)--(5,8.6)--(11.12,-2)--cycle;
	\end{scope}

	\begin{scope}[shift={(3.85,0)}]
	\node[point] (w) at (8.5,0.65) [label=above left: $w_2$] {};
	\node[point2] (b) at (9.6,-1.1) [label=left:] {};
	\node[point] (wp) at (10.6,0.65) [label=above right: $w_2^+$] {};
	\draw (w) -- (b) --(wp);
	\end{scope}
	
	\begin{scope}[shift={(-8.05,0)}]
	\node[point] (w) at (8.5,0.65) [label=above left: $w$] {};
	\node[point2] (b) at (9.6,-1.1) [label=left:] {};
	\node[point] (wp) at (10.6,0.65) [label=above right: $w^+$] {};
	\draw (w) -- (b) --(wp);
	\end{scope}
	
	\draw[very thick] (v) --(u);

	\draw[dashed] (v) -- (-12,7);
	
	\begin{scope}[shift={(-7,0)}]
	\draw[->] (-5,4.5) -- (-5,7);
	\draw[->] (-5,3.1) -- (-5,0.65);
	\node at (-5,2.55) [label=above:$h$] {};
	\end{scope}
	
	\node[point] (x) at ($ (u)+(-4.5,7.8) $) [label=left:] {};
	\draw[snake=snake] (u) to (x);
	
	\node[point] (y) at ($ (u2)+(3.18,5.51) $) [label=left:] {};
	\draw[snake=snake] (u2) to (y);
	
	\node[point] (s) at ($ 0.5*(x)+0.5*(y)+(0,4) $) [label=above:$s$] {};
	
	\draw (x) -- (s) -- (y);

	\coordinate (s2) at (-17,20.53) [label=above:$\ell$] {};
	\draw[dashed] (s2) -- (s);
	
	\draw[->] (-17,11.29) -- (s2);
	\draw[->] (-17,9.89) -- (-17,0.65);
	\node at (-17,9.34) [label=above:$\ell$] {};

	\draw[dashed] (v2) -- (30,9.3);
	\draw[->] (30,5.675) -- (30,9.3);
	\draw[->] (30,4.275) -- (30,0.65);
	\node at (30,3.65) [label=above:$\ell-h-2$] {};
	
	\draw[dashed] (30,0.65) -- (-17,0.65);

	\end{tikzpicture}
\caption{The distance between $u_2$ and $w_2^+$ is $\ell-h-1$ 
and the distance between $v_2$ and $w_2^+$ is $\ell-h$.
Note that the grey common neighbor of $w_2$ and $w_2^+$ may not exist.}\label{fig4}
\end{figure}
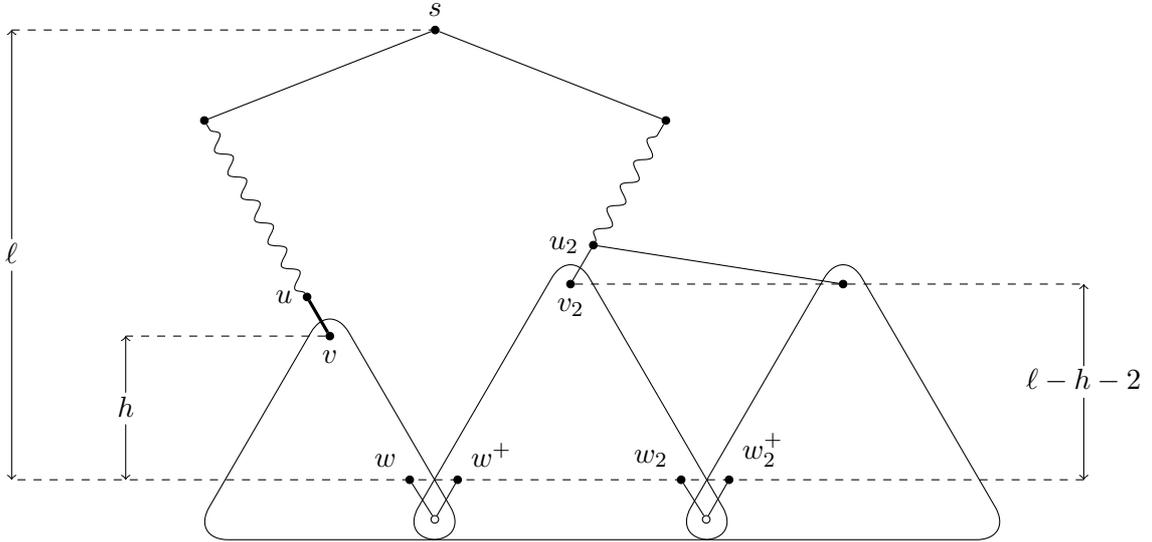

By the above statement about the distance of $w^+$ and $w_2$ in $G'$,
it follows that $w_2^+$ is closer to $u$ than to $v$.
By the structure of $G_H$, 
this implies that $N^{\downarrow}(v_2)=N^R(v)$, and, hence
$$N=N^L(v)\cup N^{\downarrow}(v)\cup N^{\downarrow}(v_2).$$
Altogether, we can estimate the contribution of $v$ 
to the right-hand side of (\ref{e-3}) using $h$ and $\ell$,
and it remains to estimate how many choices of $v$ yield the same values of $h$ and $\ell$.
There are 
\begin{itemize}
\item[(i)] $\Delta(\Delta-1)^{H-\ell-1}$ black vertices of height $\ell$, which are candidates for $s$,
\item[(ii)] at most $(\Delta-1)$ pairs of edges from $s$ to consecutive children 
starting the shortest paths from $s$ to $w$ and $w^+$, and, 
\item[(iii)] by (\ref{e-1}), at most $4(\Delta-1)^{(\ell-h-2)+1}$ vertices in $N^{\downarrow}(v_2)$,
\end{itemize}
which implies
\begin{eqnarray}\nonumber
\sum\limits_{v\in V(T_H)\setminus \{ r\}}\left|N^R(v)\right|
& \leq & 
\sum_{h=0}^{H-2}\sum_{\ell=h+2}^{H}
\underbrace{\Delta(\Delta-1)^{H-\ell-1}}_{(i)}\underbrace{(\Delta-1)}_{(ii)}\underbrace{4(\Delta-1)^{(\ell-h-2)+1}}_{(iii)}\\ \nonumber
& = & 
\sum_{\ell=2}^{H}4\Delta(\Delta-1)^{H-\ell}\sum_{h=0}^{\ell-2}
(\Delta-1)^{\ell-h-1}\\ \nonumber
& \leq  & 
\sum_{\ell=2}^{H}4\Delta(\Delta-1)^{H-\ell}\frac{(\Delta-1)^{\ell}}{(\Delta-2)}\\ \label{e-4}
& \leq & 
\frac{4\Delta}{(\Delta-2)}(\Delta-1)^HH.
\end{eqnarray}
Combining the above estimates, we obtain
\begin{eqnarray}\nonumber
\sum\limits_{(v,u)\in E\left(\vec{T}_H\right)}\bar{n}_{G_H}(v,u)
&=& \sum\limits_{v\in V(T_H)\setminus \{ r\}}
\left(\left|N^L(v)\right|
+\left|N^{\downarrow}(v)\right|
+\left|N^R(v)\right|\right)\\ \nonumber
&\stackrel{(\ref{e-3})}{=}& 2\sum\limits_{v\in V(T_H)\setminus \{ r\}}\left|N^R(v)\right|
+\sum\limits_{v\in V(T_H)\setminus \{ r\}}\left|N^{\downarrow}(v)\right|\\ \nonumber
&\stackrel{(\ref{e-2}),(\ref{e-4})}{\leq} & \frac{8\Delta}{(\Delta-2)}(\Delta-1)^HH
+4\Delta(\Delta-1)^HH\\ \label{e-5}
&\leq & \left(\frac{8\Delta}{(\Delta-2)}+4\Delta\right)(\Delta-1)^HH.
\end{eqnarray}
Next, we consider the contribution 
\begin{eqnarray}\label{e-6}
\sum\limits_{(v,u)\in E\left(\vec{G}_H\right)\setminus E\left(\vec{T}_H\right)}\bar{n}_{G_H}(v,u)
\end{eqnarray}
of the grey edges.
Let $v$ be a grey vertex.
Let $u_1,\ldots,u_\Delta$ be the parents of $v$ in their linear order from left to right.
Since $H\geq 2$, there is some $i\in \{ 1,\ldots,\Delta-1\}$ such that 
$u_1,\ldots,u_i$ have the same parent $p$ and
$u_{i+1},\ldots,u_\Delta$ have the same parent $p^+$.
We call the grey edges between $v$ and $u_1,\ldots,u_i$
the {\it left grey edges}
and the grey edges between $v$ and $u_{i+1},\ldots,u_\Delta$
the {\it right grey edges}.
Similarly as above, by the left-right symmetry of $G_H$, it suffices to consider only the left grey edges.
Now, let $uv$ be a left grey edge incident with $v$.
By symmetry between $u_1,\ldots,u_i$, we may assume that $u=u_i$.
Note that all $i(\Delta-1)\leq (\Delta-1)^2$ grey edges 
between $u_1,\ldots,u_i$ and the set of $\Delta-1$ grey vertices 
with the same neighborhood as $v$
are symmetric and, hence, contribute equally to (\ref{e-6}). 
Again, let $N$ be the set of vertices of $G$ 
whose distance to $v$ is at most their distance to $u$.
Note that $u_1,\ldots,u_{i-1}$ belong to $N$
but that all remaining vertices in $N$ lie to the right
of the path in $G_H$ formed by $v$, $u$, and the ancestors of $u$.
Let $s$ be the lowest common ancestor of $u=u_i$ and $u^+=u_{i+1}$.
Let the height of $s$ be $h+1$.
Since $u$ and $u^+$ have different parents, we have $h\geq 1$.
Let $v_2$ be the ancestor of $u^+$ that is a child of $s$.
Note 
that $s$ is closer to $u$ than to $v$ and 
that $v_2$ is closer to $v$ than to $u$, 
which implies $N^{\downarrow}(v_2)\subseteq N$.
Let $w$ be the rightmost black descendant of depth $H$ of $v_2$.
Let $t$ be the lowest common ancestor of $w$ and $w^+$.
Note that the distance between $u^+$ and $w$ 
in $G_H\left[N^{\downarrow}(v_2)\right]$ is $2h$.

If $t=s$, then $w^+$ is closer to $u$ than to $v$, 
which implies 
$$N=\{ u_1,\ldots,u_{i-1}\}\cup N^{\downarrow}(v_2);$$
see Figure \ref{fig5} for an illustration.

\begin{figure}[H]
	\centering

	\begin{tikzpicture} [scale=0.27] 
	\tikzstyle{point}=[draw,circle,inner sep=0.cm, minimum size=1mm, fill=black]
	\tikzstyle{point2}=[draw,circle,inner sep=0.cm, minimum size=1mm, fill=white]
	
	\draw [rounded corners, rounded corners=5mm] (-1.12,-2)--(5,8.6)--(11.12,-2)--cycle;
	
	\begin{scope}[shift={(9.2,0)}]
	\node[point] (vp) at (5,7) [label=below:] {};		
	\draw [rounded corners, rounded corners=5mm] (-1.12,-2)--(5,8.6)--(11.12,-2)--cycle;
	\end{scope}

	\node[point] (v2) at (5,7) [label=below:$v_2$] {};
		\node[point] (u2) at (0.34,10) [label=above:\text{$t=s$}] {};
	\draw (u2) -- (vp);
	\draw (v2) --(u2);
	
	\node[point] (w) at (8.5,0.65) [label=above left: $w$] {};
	\node[point2] (b) at (9.6,-1.1) [label=left:] {};
	\node[point] (wp) at (10.6,0.65) [label=above right: $w^+$] {};
	\draw (w) -- (b) --(wp);
	
	\draw[dashed] (15,7) -- (-8,7);
	\draw[dashed] (18.5,0.65) -- (-8,0.65);
	\draw[->] (-8,4.5) -- (-8,7);
	\draw[->] (-8,3.1) -- (-8,0.65);
	\node at (-8,2.35) [label=above:$h$] {};

	\node[point2](v) at (0.34,-1.1) [label=right:$v$] {};
\node[point](u) at (-0.66,0.65) [label=above right:] {};
\node at (-1.5,0.4) [label=above right:$u$] {};
\node[point](up) at (1.34,0.65) [label=above right:$u^+$] {};
\draw[very thick] (u) -- (v);
\draw (v) -- (up);
	
	\node[point] (v2m) at (-4.32,7) [label=below:] {};
	\draw[snake=snake] (u) to (v2m);
	
	\draw (v2m) -- (u2);

	\end{tikzpicture}
\caption{In the case $t=s$, 
the distance between $u$ and $w^+$ is $2h+2$ 
and the distance between $v$ and $w^+$ is $2h+3$.
Note that the grey common neighbor of $w$ and $w^+$ 
may not exist.}\label{fig5}
\end{figure}
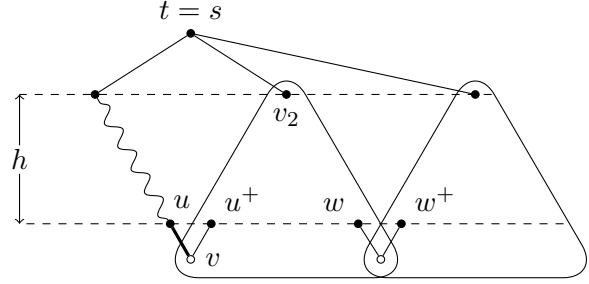

Now, suppose that $t\not=s$.
Let $t$ have height $\ell$.
Note that $\ell\geq h+2$.
Let $v_3$ be the ancestor of $w^+$ of height $\ell-h-2$
and let $u_3$ be the parent of $v_3$.
Note that $v_3$ is closer to $v$ than to $u$ and
that $u_3$ is closer to $u$ than to $v$.
Let $w_2$ be the rightmost black descendant of depth $H$ of $v_3$.
Note that $w_2^+$ is a descendant of $u_3$.
Since the distance of $w^+$ and $w_2$ 
in $G_H\left[N^{\downarrow}(v_3)\right]$ is $2(\ell-h-2)$,
some shortest path between $v$ and $w_2$ passes through $v_3$.
Since $w_2^+$ is closer to $u_3$ than to $v_3$,
it follows that $w_2^+$ is also closer to $u$ than to $v$,
which implies 
$$N=\{ u_1,\ldots,u_{i-1}\}\cup N^{\downarrow}(v_2)\cup N^{\downarrow}(v_3);$$
see Figure \ref{fig6} for an illustration.

\begin{figure}[H]
	\centering
	
	\begin{tikzpicture} [scale=0.27]
	\tikzstyle{point}=[draw,circle,inner sep=0.cm, minimum size=1mm, fill=black]
	\tikzstyle{point2}=[draw,circle,inner sep=0.cm, minimum size=1mm, fill=white]
	
	\draw [rounded corners, rounded corners=5mm] (-1.12,-2)--(5,8.6)--(11.12,-2)--cycle;
	
	\xdef\hhh{13.9} 
	\xdef\www{16} 
	
	\begin{scope}[shift={(16.1,0)}]
	\node[point] (v3) at (0,\hhh-3.6) [label=below:$v_3$] {};
	\node[point] (u3) at (1,\hhh-3.6+1.72) [label=left:$u_3$] {};		
	\draw (v3) -- (u3);		
	\draw [rounded corners, rounded corners=5mm] (-\www/2,-2)--(0,\hhh-2)--(\www/2,-2)--cycle;
	\end{scope}
	
		\begin{scope}[shift={(29.1,0)}]
	\node[point] (v3p) at (0,\hhh-3.6) [label=below:$v_3^+$] {};
	\draw [rounded corners, rounded corners=5mm] (-\www/2,-2)--(0,\hhh-2)--(\www/2,-2)--cycle;
	\end{scope}
		\draw (v3p) -- (u3);		
		
	\begin{scope}[shift={(13,0)}]
			\node[point] (w) at (8.5,0.65) [label=above left: $w_2$] {};
		\node[point2] (b) at (9.6,-1.1) [label=left:] {};
		\node[point] (wp) at (10.6,0.65) [label=above right: $w_2^+$] {};
		\draw (w) -- (b) --(wp);
	\end{scope}

	\node[point] (v2) at (5,7) [label=below:$v_2$] {};
	\node[point] (u2) at ($(v2)+(-1,1.732)$) [label=right:$s$] {};
	\draw (v2) --(u2);
	
	\node[point] (w) at (8.5,0.65) [label=above left: $w$] {};
	\node[point2] (b) at (9.6,-1.1) [label=left:] {};
	\node[point] (wp) at (10.6,0.65) [label=above right: $w^+$] {};
	\draw (w) -- (b) --(wp);
	
	\draw[dashed] (5.7,7) -- (-5,7);
	\draw[->] (-5,4.5) -- (-5,7);
	\draw[->] (-5,3.1) -- (-5,0.65);
	\node at (-5,2.30) [label=above:$h$] {};

  	\node[point2](v) at (0.34,-1.1) [label=right:$v$] {};
	\node[point](u) at (-0.66,0.65) [label=above right:] {};
	\node at (-1.5,0.4) [label=above right:$u$] {};
	\node[point](up) at (1.34,0.65) [label=above right:$u^+$] {};
	\draw[very thick] (u) -- (v);
	\draw (v) -- (up);
	\node[point] (v2m) at (-4.32,7) [label=below:] {};
	\draw[snake=snake] (u) to (v2m);
	
	\draw (v2m) -- (u2);
	
	\node[point] (y) at ($(u3)+	(3.66,6.35)$) [label=below:] {};
	\draw[snake=snake] (u3) to (y);
	
	\coordinate (temp) at (-0.577*\hhh+0.577*0.65+0.577*3.6-0.577*6.3-0.577*1.72,\hhh-0.65-3.6+6.3+1.72) [label=below:] {};;
	\node[point] (x) at ($(w)+(temp)$) [label=below:] {};
	\draw[snake=snake] (u2) to (x);
		
	\node[point] (t) at ($0.5*(x)+0.5*(y)+(0,4)$)	[label=above:$t$] {};
	\draw (x) -- (t)--(y);

	\draw[dashed] ($(v3)+(-0.7,0)$) -- (40,\hhh-3.6);
	\draw[<-] (40,\hhh-3.6) -- (40,0.5*\hhh-1.8+0.65+0.7);
	\draw[<-] (40,0.65) -- (40,0.5*\hhh-1.8+0.65-0.7);
	\node at (40,0.5*\hhh-1.8+0.65-0.7-0.6-0.2) [label=above:$\ell -h-2$] {};
	
	\coordinate (p) at ($(u3)+(17,0)$) [label=above:] {};
	\draw[dashed] (u3) -- (p);
	\draw[<-] (p) -- ($(p)+0.5*(0,6.35)+(0,-0.7)$);
	\coordinate (p2) at ($(p)+(0,6.35)$) [label=above:] {};
	\draw[<-] (p2) -- ($(p)+0.5*(0,6.35)+(0,0.7)$);

	\draw[dashed] (p2) -- (y);
	\node at ($(p)+0.5*(0,6.35)+(0,-0.7)+(0,-0.75)$) [label=above:$h$] {};

	\coordinate (q) at (-10,0) [label=above:] {};
	\coordinate (q2) at (t-|q) [label=above:] {};
	\coordinate (q3) at (u-|q2) [label=above:] {};
	\draw[->] ($0.5*(q2)+0.5*(q3)+(0,0.7)$) -- (q2);
	\draw[->] ($0.5*(q2)+0.5*(q3)+(0,-0.7)$) -- (q3);
	\draw[dashed] (q2) -- (t);
	\node at ($0.5*(q2)+0.5*(q3)+(0,-0.7)+(0,-0.75)$) [label=above:$\ell$] {};
	
	\draw[dashed] (40,0.65) -- (q3);

	\end{tikzpicture}
\caption{In the case $t\not=s$, 
the distance between $u$ and $w_2^+$ is $2\ell$ 
and the distance between $v$ and $w_2^+$ is $1+2h+2+2(\ell-h-2)+2
=2\ell+1$.}\label{fig6}
\end{figure}
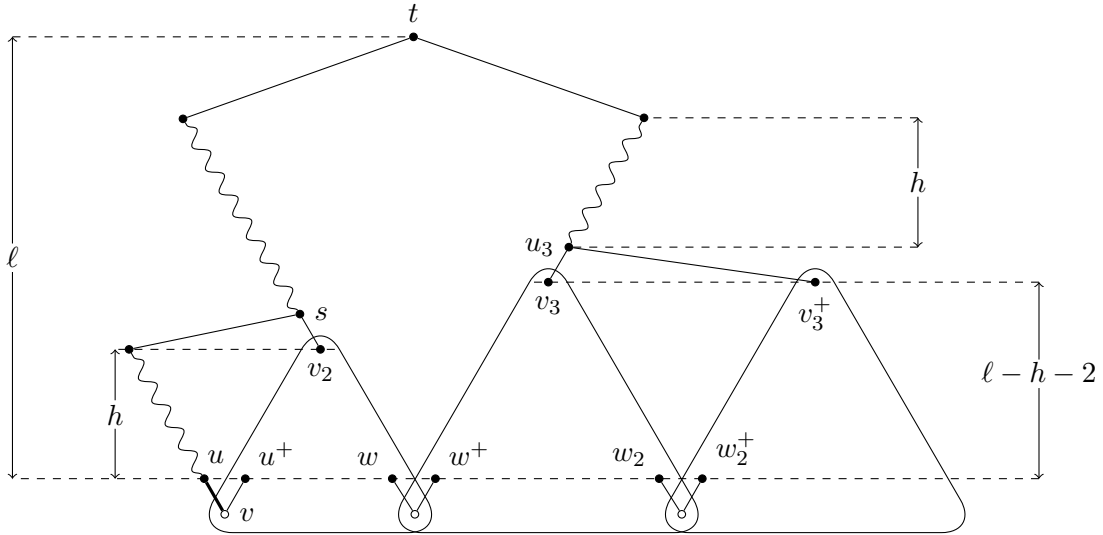

We are now able to estimate the contribution of the left grey edge $uv$ 
to (\ref{e-6}) using $h$ and $\ell$.
As above, we estimate how many choices for the left grey edge $uv$ 
yield the same values of $h$ and $\ell$.

First, we consider the cases in which $\ell=h+1$, that is, $s$ and $t$ coincide.
There are 
\begin{itemize}
\item[(i)] $\Delta(\Delta-1)^{H-(h+1)-1}$ black vertices of height $h+1$, which are candidates for $s$,
\item[(ii)] at most $(\Delta-2)$ triples of edges from $s$ to consecutive children 
starting the shortest paths from $s$ to $u$, $u^+$, $w$, and $w^+$, 
\item[(iii)] at most $(\Delta-1)^2$ left grey edges symmetric to $uv$, 
\item[(iv)] at most $\Delta-2$ vertices in $\{ u_1,\ldots,u_{i-1}\}$, and 
\item[(v)] by (\ref{e-1}), at most $4(\Delta-1)^{h+1}$ vertices in $N^{\downarrow}(v_2)$.
\end{itemize}
Next, we consider the cases in which $\ell\geq h+2$, that is, $s$ and $t$ are distinct.
There are 
\begin{itemize}
\item[(vi)] $\Delta(\Delta-1)^{H-\ell-1}$ black vertices of height $H-\ell$, which are candidates for $t$,
\item[(vii)] at most $(\Delta-1)$ pairs of edges from $t$ to consecutive children 
starting the shortest paths from $t$ to $w$ and $w^+$, 
\item[(viii)] at most $(\Delta-1)^2$ left grey edges symmetric to $uv$, 
\item[(ix)] at most $\Delta-2$ vertices in $\{ u_1,\ldots,u_{i-1}\}$,
\item[(x)] by (\ref{e-1}), at most $4(\Delta-1)^{h+1}$ vertices in $N^{\downarrow}(v_2)$, and
\item[(xi)] by (\ref{e-1}), at most $4(\Delta-1)^{\ell-h-1}$ vertices in $N^{\downarrow}(v_3)$
\end{itemize}
Combining the above estimates, 
and using the symmetry between the left grey edges and the right grey edges, we obtain
\begin{eqnarray}\nonumber
&& \sum\limits_{(v,u)\in E\left(\vec{G}_H\right)\setminus E\left(\vec{T}_H\right)}\bar{n}_{G_H}(v,u)\\
& \leq & \nonumber
2\sum_{h=1}^H
\underbrace{\Delta(\Delta-1)^{H-(h+1)-1}}_{(i)}
\underbrace{(\Delta-2)}_{(ii)}
\underbrace{(\Delta-1)^2}_{(iii)}
\Bigg(\overbrace{\underbrace{(\Delta-2)}_{(iv)}
+\underbrace{4(\Delta-1)^{h+1}}_{(v)}}^{\leq 5(\Delta-1)^{h+1}}\Bigg)\\
&&+2\sum_{\ell=3}^H\sum_{h=1}^{\ell-2}\nonumber
\underbrace{\Delta(\Delta-1)^{H-\ell-1}}_{(vi)}
\underbrace{(\Delta-1)}_{(vii)}
\underbrace{(\Delta-1)^2}_{(viii)}
\Bigg(\overbrace{\underbrace{(\Delta-2)}_{(ix)}
+\underbrace{4(\Delta-1)^{h+1}}_{(x)}}^{\leq 5(\Delta-1)^{h+1}}
+\underbrace{4(\Delta-1)^{\ell-h-1}}_{(xi)}\Bigg)\\
&\leq & \left(10\Delta(\Delta-1)(\Delta-2)+\frac{18\Delta(\Delta-1)^2}{(\Delta-2)}\right)(\Delta-1)^HH.\label{e-7}
\end{eqnarray}
Altogether, using (\ref{e-5}), (\ref{e-7}), and $n\geq (\Delta-1)^H$, 
we obtain
\begin{eqnarray*}
\sum\limits_{(v,u)\in E\left(\vec{G}_H\right)}\bar{n}_{G_H}(v,u)
&\leq &\Bigg(\underbrace{\frac{8\Delta}{(\Delta-2)}}_{\leq 24}+4\Delta+10\Delta(\Delta-1)(\Delta-2)
+\underbrace{\frac{18\Delta(\Delta-1)^2}{(\Delta-2)}}_{\leq 36\Delta(\Delta-1)}\Bigg)\underbrace{(\Delta-1)^HH}_{\leq n\log(n)}\\
&\leq &\left(10\Delta^3+6\Delta^2-12\Delta+24\right)n\log(n),
\end{eqnarray*}
which completes the proof.
\end{proof}
Together Lemma \ref{lemma-1} and Lemma \ref{lemma-2} imply
$$Mo(G_H)\geq \frac{\Delta}{2}n^2-\left(20\Delta^3+12\Delta^2-24\Delta+48\right)n\log(n),$$
which implies Theorem \ref{theorem1}.

\bigskip

We proceed to the proof of Theorem \ref{theorem2}.

We recall some more terminology.
A rooted $(\Delta-1)$-ary tree $T$ of height $h$
is {\it complete} 
if all vertices of $T$ of depth at most $h-1$ have exactly $\Delta-1$ children,
and {\it almost complete} 
if all vertices of $T$ of depth at most $h-2$ have exactly $\Delta-1$ children.
For a vertex $u$ in a rooted tree $T$, 
let $d(u)$ denote the depth of $u$ in $T$ and 
let $n^{\downarrow}(u)$ denote the number of vertices of $T$
that are equal to $u$ or a descendant of $u$.

\begin{lemma}\label{lemma3}
If $T$ is a rooted $(\Delta-1)$-ary tree of order $n$, then
$$\sum\limits_{u\in V(T)}n^{\downarrow}(u)\geq  \frac{(\Delta-2)}{(\Delta-1)^2}n\left(\log\Big((\Delta-2)n\Big)-1\right).$$
\end{lemma}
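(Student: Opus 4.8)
The plan is to reduce the general $(\Delta-1)$-ary tree to the extremal case — the almost complete $(\Delta-1)$-ary tree — and then compute the sum $\sum_u n^{\downarrow}(u)$ directly for that case. The starting observation is the identity
$$\sum_{u\in V(T)}n^{\downarrow}(u)=\sum_{u\in V(T)}\big(d(u)+1\big),$$
since $n^{\downarrow}(u)$ counts the pairs $(u,w)$ with $w$ a descendant of $u$ (including $w=u$), and dually each vertex $w$ is counted once for every ancestor of $w$ together with $w$ itself, i.e.\ $d(w)+1$ times. So the quantity to be bounded from below is $n+\sum_{u}d(u)$, the (shifted) \emph{external path length}. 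Minimising $\sum_u d(u)$ over all rooted $(\Delta-1)$-ary trees on $n$ vertices is a classical exchange argument: if some vertex at depth $d$ has fewer than $\Delta-1$ children while some leaf exists at depth $>d+1$, move that leaf up to become a child of the deficient vertex; this strictly decreases the sum. Hence the minimum is attained when the tree is ``packed as tightly as possible,'' i.e.\ all levels $0,1,\dots,h-1$ are full and level $h$ is arbitrary — an almost complete tree of some height $h$.

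For the almost complete tree I would set up the bound explicitly. Let $h$ be its height; the number of vertices at depth $i<h$ is $(\Delta-1)^i$, so the full levels already contain $\frac{(\Delta-1)^h-1}{\Delta-2}$ vertices, whence $n\le \frac{(\Delta-1)^h-1}{\Delta-2}+(\Delta-1)^h\le\frac{2(\Delta-1)^h}{\Delta-2}$ roughly, and in particular $(\Delta-1)^h\ge\frac{(\Delta-2)n}{\Delta-1}$ (a clean inequality of this shape is what feeds the final logarithm). Now lower-bound $\sum_u d(u)$: every vertex other than the root at depth $\le h-1$ contributes at least its depth, and rather than track the fractional last level carefully I would simply note that at least $n-(\Delta-1)^{h-1}$ vertices have depth $\ge h-1$ is too crude; instead use that the number of vertices of depth $< i$ is at most $\frac{(\Delta-1)^i-1}{\Delta-2}\le\frac{(\Delta-1)^i}{\Delta-2}$, so the number of depth $\ge i$ is at least $n-\frac{(\Delta-1)^i}{\Delta-2}$, giving
$$\sum_{u\in V(T)}d(u)\;\ge\;\sum_{i\ge 1}\Big(n-\tfrac{(\Delta-1)^i}{\Delta-2}\Big)_{+}.$$
Summing the positive terms up to $i=\big\lfloor\log((\Delta-2)n)\big\rfloor$ and bounding the geometric tail by $\frac{(\Delta-1)}{(\Delta-2)^2}(\Delta-1)^{\log((\Delta-2)n)}=\frac{(\Delta-1)}{\Delta-2}n$ yields a bound of the form $n\big(\log((\Delta-2)n)-c\big)$ with a constant $c$ absorbed into the ``$-1$'' and the $\frac{\Delta-2}{(\Delta-1)^2}$ prefactor after accounting for the loss in the exchange-to-almost-complete reduction.

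The main obstacle I expect is bookkeeping the constants so that the final coefficient comes out as exactly $\frac{\Delta-2}{(\Delta-1)^2}$ with only a ``$-1$'' correction inside the logarithm, rather than a messier $\log((\Delta-2)n)-O(1)$ with an unspecified constant. The geometric-series tail and the ``$(\cdot)_+$'' truncation both leak $O(n)$ terms, and one has to be a little careful that these are dominated by the $\frac{\Delta-2}{(\Delta-1)^2}n$ that is being subtracted; choosing to state the bound with $(\Delta-2)n$ inside the log (rather than $n$) is precisely the device that makes the arithmetic close. A secondary point is justifying rigorously that the almost complete tree is the minimiser — the exchange argument is standard but should be phrased so that it terminates (each step strictly decreases a non-negative integer), and one should check the edge cases where $n$ is itself of the form $\frac{(\Delta-1)^{h}-1}{\Delta-2}$ so that the ``last level'' is empty.
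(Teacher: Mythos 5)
Your proposal is correct and follows the same skeleton as the paper's proof: the double-counting identity $\sum_u n^{\downarrow}(u)=\sum_u(d(u)+1)$, reduction to the almost complete tree, and then an estimate of the path length via level sizes. The one substantive difference is in the final estimate. The paper keeps only the contribution of the $(\Delta-1)^{h-1}$ vertices at depth $h-1$, each contributing $h\geq \log((\Delta-2)n)-1$, which lands exactly on the stated bound with no further checking. You instead write $\sum_u d(u)=\sum_{i\geq 1}|\{u:d(u)\geq i\}|$ and use $|\{u:d(u)\geq i\}|\geq n-\frac{(\Delta-1)^i}{\Delta-2}$. This has two consequences worth noting. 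First, your level-counting inequality holds for an \emph{arbitrary} $(\Delta-1)$-ary tree, not just the almost complete one, so the exchange argument you spend a paragraph justifying is actually superfluous in your route --- a genuine simplification, since the paper only asserts the minimization claim with ``it is easy to see.'' Second, your bound comes out as $n\bigl(\log((\Delta-2)n)-\frac{\Delta-1}{\Delta-2}\bigr)$, which has a better leading constant than the target but a worse additive one; to deduce the stated inequality you need the small additional observation that the target is at most $n$ (and hence trivially true, since $n^{\downarrow}(u)\geq 1$ for every $u$) precisely in the range of $n$ where your additive loss could hurt, and that the two regimes overlap. You correctly flagged this bookkeeping as the remaining work; it does close, so the proof is sound.
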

\begin{proof}
Since each vertex $v$ of $T$ contributes $1$ 
to exactly $d(v)+1$ of the values $n^{\downarrow}(u)$,
double-counting yields 
$$\sum\limits_{u\in V(T)}n^{\downarrow}(u)=\sum\limits_{v\in V(T)}(d(v)+1).$$
Since $T$ is $(\Delta-1)$-ary, 
it is easy to see that $\sum\limits_{v\in V(T)}(d(v)+1)$
is minimized if $T$ is almost balanced.
Hence, for the rest of the proof, we assume that $T$ is almost balanced.

The height $h$ of $T$ satisfies
$\sum\limits_{i=0}^h(\Delta-1)^i=\frac{(\Delta-1)^{h+1}-1}{\Delta-2}\geq n$ and, hence,
$(\Delta-1)^{h+1}\geq (\Delta-2)n$
or equivalently
$h\geq \log((\Delta-2)n)-1$.
Since $T$ contains $(\Delta-1)^{h-1}$ vertices of depth $h-1$, we obtain
\begin{eqnarray*}
\sum\limits_{u\in V(T)}n^{\downarrow}(u) 
& \geq & (\Delta-1)^{h-1}h
\geq \frac{(\Delta-2)}{(\Delta-1)^2}n\left(\log\Big((\Delta-2)n\Big)-1\right),
\end{eqnarray*}
which completes the proof.
\end{proof}
The following lemma contains the key observation for the proof of Theorem \ref{theorem2}.

\begin{lemma}\label{lemma4}
If $T$ is a rooted tree of order $n$ and maximum degree at most $\Delta$, then
$$\sum\limits_{u\in V(T)}\min\left\{ d(u),n^{\downarrow}(u)\right\}
\geq (1-o(1))\frac{(\Delta-2)}{(\Delta-1)^2}n\log(\log(n)),$$
where the $o(1)$ term only depends on $n$.
\end{lemma}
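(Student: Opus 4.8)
\emph{Overview of the plan.} I would bound the sum below by a truncated layer-cake expansion. For every positive integer $m$ and every vertex $u$ of $T$,
$$\min\big\{d(u),n^{\downarrow}(u)\big\}\ \ge\ \big|\{i\in\{1,\dots,m\}:d(u)\ge i\text{ and }n^{\downarrow}(u)\ge i\}\big|,$$
so summing over $u$ yields
$$\sum_{u\in V(T)}\min\big\{d(u),n^{\downarrow}(u)\big\}\ \ge\ \sum_{i=1}^{m}\big|\{u\in V(T):d(u)\ge i\text{ and }n^{\downarrow}(u)\ge i\}\big|.$$
Thus it suffices to choose $m=m(n)$ with $m=(1-o(1))\log(n)$ so that the $i$-th term on the right is at least $(1-o(1))\tfrac{n}{(\Delta-2)\,i}$ for every $i\le m$: summing the harmonic series then gives $(1-o(1))\tfrac{n}{\Delta-2}\ln(m)$, and since $\ln(m)=(1-o(1))\ln\log(n)=(1-o(1))\ln(\Delta-1)\log(\log(n))$ this equals $(1-o(1))\tfrac{\ln(\Delta-1)}{\Delta-2}\,n\log(\log(n))$, which is at least $(1-o(1))\tfrac{\Delta-2}{(\Delta-1)^2}\,n\log(\log(n))$ because $\tfrac{\ln(\Delta-1)}{\Delta-2}\ge\tfrac{\Delta-2}{(\Delta-1)^2}$ for every integer $\Delta\ge3$.

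\emph{Estimating one term.} Fix $i\le m$ and let $A_i=\{u\in V(T):n^{\downarrow}(u)\ge i\}$. Since $n^{\downarrow}$ strictly increases along the path from any vertex to the root, $A_i$ is ancestor-closed, so it induces a subtree of $T$ containing the root, and the components of $T-A_i$ are exactly the full subtrees of $T$ rooted at the vertices $v\notin A_i$ whose parent lies in $A_i$; each such component has order $n^{\downarrow}(v)\le i-1$. Let $q$ be the number of components. The vertices having a parent in $A_i$ are precisely the non-root vertices of $A_i$ together with the $q$ roots of these components, so there are $(|A_i|-1)+q$ of them; on the other hand this number is at most $\Delta+(|A_i|-1)(\Delta-1)$ since the root has at most $\Delta$ children and every other vertex has at most $\Delta-1$ children. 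Hence $q\le\Delta+(\Delta-2)(|A_i|-1)$, and from $n\le|A_i|+q(i-1)$ we obtain
$$|A_i|\ \ge\ \frac{n-2(i-1)}{1+(\Delta-2)(i-1)}\ \ge\ (1-o(1))\frac{n}{(\Delta-2)\,i},$$
uniformly in $i\le m$, as $m\le\log(n)$. The $i$-th term above is at least $|A_i|-|\{u:d(u)\le i-1\}|$, and the number of vertices of depth at most $i-1$ is at most $1+\tfrac{\Delta}{\Delta-2}\big((\Delta-1)^{i-1}-1\big)$; choosing $m=\big\lfloor\log(n)-2\log(\log(n))\big\rfloor$ makes this bound $o\!\big(\tfrac{n}{(\Delta-2)\,i}\big)$ uniformly in $i\le m$, the binding case being $i=m$, where one loses only the factor $\tfrac{\Delta}{(\Delta-1)\log(n)}\to0$. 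Subtracting gives the required lower bound $(1-o(1))\tfrac{n}{(\Delta-2)\,i}$ on the $i$-th term.

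\emph{Main obstacle.} The only step needing care is obtaining the denominator $(\Delta-2)\,i$, and not $\Delta\,i$, in the bound for $|A_i|$: the coarser estimate $|A_i|\ge\tfrac{n}{\Delta i}$ would yield only the constant $\tfrac{\ln(\Delta-1)}{\Delta}$, which is already smaller than $\tfrac{\Delta-2}{(\Delta-1)^2}$ for $\Delta=3$, so the counting argument must genuinely exploit that every vertex other than the root has at most $\Delta-1$ children. Everything else — verifying $m=(1-o(1))\log(n)$, converting $\ln\log(n)$ to base $\Delta-1$, and checking that the two error terms $2(i-1)$ and $1+\tfrac{\Delta}{\Delta-2}\big((\Delta-1)^{i-1}-1\big)$ are both bounded by $\varepsilon_n\cdot\tfrac{n}{(\Delta-2)\,i}$ for a single sequence $\varepsilon_n\to0$ and all $i\le m$ — is routine bookkeeping. (One can instead apply Lemma~\ref{lemma3} to the subtrees of $T$ hanging below depth $m$, but only after separately handling the case in which $T$ is essentially a path; the layer-cake estimate treats all rooted trees uniformly.)
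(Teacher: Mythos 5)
Your proof is correct, and it takes a genuinely different route from the paper's. The paper splits $V(T)$ into the shallow vertices $A$ (depth below $d_0\approx\tfrac12\log n$), the deep vertices $B$ with $d(u)<n^{\downarrow}(u)$, and the rest $C$; it then applies a separate lemma on rooted $(\Delta-1)$-ary trees (whose proof reduces to almost balanced trees) to each component of $T[C]$, and finishes with a convexity/averaging argument over the component sizes, distinguishing the cases of $B$ large and $B$ small. You instead use a layer-cake expansion of $\min\{d(u),n^{\downarrow}(u)\}$ and, for each level $i\le m$, a direct counting bound $|\{u:n^{\downarrow}(u)\ge i\}|\ge(1-o(1))\tfrac{n}{(\Delta-2)i}$ obtained by comparing the number of children of the ancestor-closed set $A_i$ with the number of components of $T-A_i$ (each of order at most $i-1$); I checked this counting step and the two error terms, and they work uniformly in $i\le m$ with your choice $m=\lfloor\log(n)-2\log(\log(n))\rfloor$. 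Your approach buys several things: it bypasses Lemma~\ref{lemma3} and the case distinction entirely, it treats all rooted trees uniformly (no reduction to balanced trees, no convexity relaxation), and the harmonic sum yields the constant $\tfrac{\ln(\Delta-1)}{\Delta-2}$, which, as you verify, dominates the paper's $\tfrac{\Delta-2}{(\Delta-1)^2}$ for every integer $\Delta\ge3$ — so you in fact prove a slightly stronger bound. You also correctly flag the one genuinely delicate point, namely that the denominator must be $(\Delta-2)i$ rather than $\Delta i$ (exploiting that non-root vertices have at most $\Delta-1$ children), since the cruder constant $\tfrac{\ln(\Delta-1)}{\Delta}$ would already fail against $\tfrac{\Delta-2}{(\Delta-1)^2}$ at $\Delta=3$. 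What the paper's route buys in exchange is mainly structural transparency: the $A/B/C$ decomposition makes visible that the extremal configurations are unions of nearly balanced $(\Delta-1)$-ary subtrees hanging at depth about $\tfrac12\log n$, which connects to the matching construction in Theorem~\ref{theorem1}.
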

\begin{proof}
Let $d_0=\left\lceil\frac{1}{2}\log(n)\right\rceil$.
Let $A$ be the set of vertices of $T$ of depth less than $d_0$,
let $B$ be the set of vertices $u$ of $T$ of depth at least $d_0$ 
such that $d(u)<n^{\downarrow}(u)$, and 
let $C$ be the set of all remaining vertices of $T$.
Let $a=|A|$ and $b=|B|$.
Since $\Delta\geq 3$, 
we have $a\leq \Delta^{d_0}\leq \Delta\sqrt{n}$.

By definition, all ancestors of every vertex in $B$ lie in $A\cup B$,
and all descendants of every vertex in $C$ lie in $C$.
The root of each component of $T[C]$ has its parent in $A\cup B$.
Since each vertex in $A\cup B$ has at most $\Delta-1$ descendants,
there are $k=(\Delta-1)(a+b)$ 
disjoint possibly empty sets $C_1,\ldots,C_k$
of orders $c_1,\ldots,c_k$
such that the non-empty $C_i$ are the vertex sets of the components of $T[C]$.

Since each component of $T[C]$ is a rooted $(\Delta-1)$-ary tree, 
we obtain, using Lemma \ref{lemma3},
\begin{eqnarray}
\sum\limits_{u\in V(T)}\min\left\{ d(u),n^{\downarrow}(u)\right\}\nonumber
&\geq& \sum\limits_{u\in B}d(u)+\sum\limits_{i=1}^k\sum\limits_{u\in C_i}n^{\downarrow}(u)\\
&\geq& d_0b+\sum\limits_{i=1}^k\frac{(\Delta-2)}{(\Delta-1)^2}c_i\left(\log\Big((\Delta-2)c_i\Big)-1\right).\label{e3}
\end{eqnarray}
If 
$b\geq \frac{n\log(\log(n))}{\log(n)}$,
then (\ref{e3}) implies 
\begin{eqnarray*}
\sum\limits_{u\in V(T)}\min\left\{ d(u),n^{\downarrow}(u)\right\}
&\geq & d_0b
\geq  \frac{1}{2}n\log(\log(n)),
\end{eqnarray*}
which completes the proof.
Hence, we may assume that
$b<\frac{n\log(\log(n))}{\log(n)}$.
Since $a\leq \Delta\sqrt{n}$,
we have 
\begin{eqnarray*}
a+b&=&(1+o(1))\frac{n\log(\log(n))}{\log(n)},
\mbox{ and, hence,}\\
n-(a+b)&=&(1-o(1))n,
\end{eqnarray*}
where the $o(1)$ terms only depend on $n$.

Since the function $f:(0,\infty)\to \mathbb{R}:x\mapsto \gamma_1x(\log (x)-\gamma_2)$ 
for positive $\gamma_1$ and $\gamma_2$ is convex,
the sum in (\ref{e3}), 
considered as a function of the $c_i$ relaxed to non-negative real numbers,
is minimized if all $c_i$ are equal,
that is, 
$$c_i=\frac{|C|}{k}=\frac{n-(a+b)}{k}=\frac{n-(a+b)}{(\Delta-1)(a+b)}=
\frac{(1-o(1))\log(n)}{(\Delta-1)\log(\log(n))}\mbox{ for each $i$.}$$
Now, (\ref{e3}) implies 
\begin{eqnarray*}
\sum\limits_{u\in V(T)}\min\left\{ d(u),n^{\downarrow}(u)\right\}
&\geq& \sum\limits_{i=1}^k\frac{(\Delta-2)}{(\Delta-1)^2}c_i\left(\log\Big((\Delta-2)c_i\Big)-1\right)\\
&\geq& (1-o(1))\frac{(\Delta-2)}{(\Delta-1)^2}n
\left(\log\left(\frac{(\Delta-2)(1-o(1))\log(n)}{(\Delta-1)\log(\log(n))}\right)-1\right)\\
&=& (1-o(1))\frac{(\Delta-2)}{(\Delta-1)^2}n\log(\log(n)),
\end{eqnarray*}
which completes the proof.
\end{proof}
Complete $(\Delta-1)$-ary trees show that the bound in Lemma \ref{lemma4} has the right dependence on $n$.

We are now in a position to complete the proof of Theorem \ref{theorem2}.
Therefore, let $G$ be a graph of order $n$ and maximum degree at most $\Delta$.
Let $K$ be a largest component of $G$.
If $K$ has order less than $n-\log(\log(n))$, then 
$n_G(u,v)\leq n-\log(\log(n))$ for every edge $uv$ of $G$, which implies
$$Mo(G)\leq \frac{\Delta}{2}n(n-\log(\log(n))),$$
and the desired upper bound on $Mo(G)$ follows.
Hence, the order $n(K)$ of $K$ is at least $n-\log(\log(n))$,
that is, 
$$n(K)=(1-o(1))n,$$
where the $o(1)$ term only depends on $n$.

Let $r$ be any vertex of $K$
and let $T$ be a breadth-first search tree of $K$ rooted in $r$.

If $uv$ is an edge of $T$, where $v$ is the parent of $u$,
then $n_G(v,u)\geq d(u)$ and $n_G(u,v)\geq n^{\downarrow}(u)$.

If $n_G(v,u)\geq n_G(u,v)$, then $n_G(v,u)\leq n-n_G(u,v)$,
which implies 
$$|n_G(u,v)-n_G(v,u)|=n_G(v,u)-n_G(u,v)\leq n-2n^{\downarrow}(u),$$
and 
if $n_G(v,u)\leq n_G(u,v)$, then $n_G(u,v)\leq n-n_G(v,u)$,
which implies 
$$|n_G(u,v)-n_G(v,u)|=n_G(u,v)-n_G(v,u)\leq n-2d(u).$$
In both cases, we obtain
\begin{eqnarray}\label{e4}
|n_G(u,v)-n_G(v,u)|\leq n-2\min\left\{ d(u),n^{\downarrow}(u)\right\}.
\end{eqnarray}
Note that $\min\left\{ d(r),n^{\downarrow}(r)\right\}=0$.

Using (\ref{e4}) and Lemma \ref{lemma4}, we obtain
\begin{eqnarray*}
Mo(G) &=& \sum\limits_{uv\in E(G)}|n_G(u,v)-n_G(v,u)|\\
&=& 
\sum\limits_{uv\in E(T)}|n_G(u,v)-n_G(v,u)|
+\sum\limits_{uv\in E(G)\setminus E(T)}|n_G(u,v)-n_G(v,u)|\\
&\leq& \left(m(T)n-2\sum\limits_{u\in V(T)}\min\left\{ d(u),n^{\downarrow}(u)\right\}\right)
+(m(G)-m(T))n\\
&\leq & m(G)n-(2-o(1))\frac{(\Delta-2)}{(\Delta-1)^2}n(K)\log(\log(n(K)))\\
&\leq & \frac{\Delta}{2}n^2-(2-o(1))\frac{(\Delta-2)}{(\Delta-1)^2}n\log(\log(n)),
\end{eqnarray*}
which completes the proof of Theorem \ref{theorem2}.

\bigskip

\noindent {\bf Acknowledgement.}
The research reported in this paper was carried out at the 
{\it 26th C5 Graph Theory Workshop} 
organized by Ingo Schiermeyer in Rathen, May 2023.
We express our gratitude to Ingo, 
not just for this year but also for the long tradition of this wonderful meeting.

\end{document}